\newtheorem{theo}{Theorem}
\newtheorem{prop}[theo]{Proposition}
\newtheorem{lemma}[theo]{Lemma}
\newenvironment{proof}{\noindent{\bf Proof:}}{$\Box$\medskip}
\newcommand{\A}{\mathcal{A}}
\newcommand{\E}{\mathbb{E}}
\newcommand{\G}{\mathcal{G}_{sh}}
\newcommand{\N}{\mathbb{N}}
\newcommand{\Nh}{(1/2)\mathbb{N}}
\newcommand{\R}{\mathbb{R}}
\newcommand{\RW}{\mathbb{R}^{\mathcal{W}}}
\newcommand{\W}{\mathcal{W}}
\newcommand{\one}{{\, 1\!\! 1\,}}
\newcommand{\sh}{{\,\scriptstyle \sqcup\!\sqcup\,}}
\title{A technique for studying  strong and weak local errors of splitting stochastic integrators}
\author{A. Alamo\footnote{Departamento de Matem\'atica Aplicada e IMUVA, Facultad de Ciencias, Universidad de Valladolid,  Spain. Email: alamozapaterouva@gmail.com}
\  and J.M. Sanz-Serna\footnote{Departamento de Matem\'aticas, Universidad Carlos III de Madrid, Avenida de la Universidad 30, E-28911 Legan\'es (Madrid), Spain. Email: jmsanzserna@gmail.com}}
\date{\today}
\begin{document}
\maketitle

\begin{abstract}
We present a technique, based on so-called  word series, to write down in a systematic way  expansions of the strong and weak local errors of splitting algorithms for the integration of Stratonovich stochastic differential equations. Those expansions immediately lead to the corresponding order conditions. Word series are similar to, but simpler than, the B-series used to analyze Runge-Kutta and other one-step integrators. The suggested approach makes it unnecessary to use  the Baker-Campbell-Hausdorff formula.
As an application, we compare two splitting algorithms recently considered by Leimkuhler and Matthews to integrate the Langevin equations. The word series method bears out clearly reasons for the advantages of one algorithm over the other.
\end{abstract}
\bigskip

\noindent\textbf{Keywords} Stochastic differential equations, splitting algorithms, Langevin equations, word series\bigskip

\noindent\textbf{Mathematical Subject Classification (2010)} 65C30, 60H05, 16T05

\section{Introduction}

We present a technique, based on so-called {\em word series}, to write down in a systematic way  expansions of the strong and weak local errors of splitting algorithms for the integration of Stratonovich stochastic differential equations (SDEs). Those expansions immediately lead to the corresponding order conditions without any need to use the Baker-Campbell-Haussdord formula. As an application we compare two splitting algorithms  recently considered by Leimkuhler and Matthews \cite{Leim}, \cite{Leim2}, \cite{Leim3} to integrate the Langevin equations.

The approach taken in this article may be seen as patterned after the seminal work of Butcher \cite{butcher63} on the {\em combinatorics} of the order conditions for Runge-Kutta deterministic integrators. As is well known, in the theory developed by Butcher, the numerical and true solution are expanded with the help of vector-valued mappings called elementary differentials. In the expansions, the elementary differentials are weighted by so-called  elementary weights. These are real numbers that change with the integrator but are independent of the system being integrated.
There are an elementary differential and an elementary weight for each rooted tree and both are easily written down because their structure is a transcription of that of the rooted tree. The elementary differentials change with the differential system being integrated but are common for all Runge-Kutta integrators and also for the true solution; this has important implications because when designing new integrators or  comparing different integrators one may focus on the elementary weights. B-series \cite{HW}, series of elementary differential with arbitrary coefficients, are a way of systematizing  Butcher's approach and extending it to more general integrators. A key result  in \cite{HW} is the rule for composing B-series. B-series have found many applications in numerical analysis, in particular in relation with modified equations \cite{aust} and geometric integration \cite{canonical}, \cite{ssc}, \cite{hlw}. For applications of B-series outside numerical mathematics see \cite{part1}, \cite{part2}.
Burrage and Burrage \cite{burrage} have  analyzed \`{a} la Butcher weak and strong errors of Runge-Kutta integrators for SDEs.
The paper \cite{china} surveys the history of these developments.

The importance of splitting algorithms \cite{survey} has been increasing in recent years, essentially as a consequence of their capability of exploiting the structure of the problem being integrated. In the deterministic case, there are several ways of investigating  the  consistency properties of a splitting integrator:
\begin{itemize}
\item The best known technique, described  in e.g.\ \cite{gi},  applies  the { Baker-Campbell-Hausdorff} formula. This method has several shortcomings, including the {\em huge combinatorial complexity} of the BCH formula itself (see \cite{maka} and \cite{phil} for a discussion).
\item An approach that parallels  Butcher's treatment of Runge-Kutta formulas has been introduced in \cite{phil} (a summary is available in \cite[Section III.3]{hlw}). As in Butcher's work, the approach is based on the use of rooted trees. The {\em B-series} expansions found in this way are also made of elementary differentials and scalar coefficients.
\item More recently {\em word series} expansions \cite{anderfocm}, \cite{orlando}, \cite{part3}, \cite{words}, \cite{juanluis}, \cite{k} have been suggested as an alternative to B-series. The scope of applicability of word series is narrower than that of B-series: splitting methods may be treated with word series but Runge-Kutta formulas may not. When applicable, word series are more convenient than B-series. They are more compact than B-series and have a composition rule (see Theorem~\ref{th:rules}) much simpler than the recipe used to compose B-series.
\end{itemize}

In the present work we extend the third technique above to cater for splitting integrators for Stratonovich SDEs, thus avoiding the complicated combinatorics involved in the BCH formula. In Section 2 we present the tools required in the rest of the article. In Section 3 we show how to expand a composition of exact solutions by using the formula for composing word series. For clarity, the idea is presented in the deterministic case where several complications of the Stratonovich scenario are absent.
In Section 4, we provide formulas for the expansion of both strong and weak local errors and write down the associated order conditions. In Section 5 the material is applied to the  case of Langevin dynamics. Leimkuhler and Matthews \cite{Leim}, \cite{Leim2} have considered two closely related splitting algorithms and found, numerically and theoretically, that  one is clearly superior to the other. We show that a word series analysis identifies {\em additional} reasons for that superiority.  Section 6 describes additional possible uses of word series in the analysis of SDE integrators.

It is well known that error expansions like those considered in Section 4 in general do not converge. This does not diminish their usefulness: by truncating the series one obtains the Taylor polynomials that are needed to write down the order conditions. Of course when {\em bounds} of the weak or strong local error are required it is necessary to estimate the remainder term in the error expansion. Although the emphasis of this article is in the combinatorics of the expansion rather than on error estimates, we have included an Appendix that illustrates how to derive error bounds for word series expansions (cf.\ \cite{orlando}, \cite{part3}, \cite{words}).

For simplicity, except in the Appendix, all mappings are assumed to be indefinitely differentiable. Of course, when that is not the case, the formulas presented below only make sense up to the order where the  derivatives that appear exist.

\section{Preliminaries}

In this section we describe word series. The presentation is very concise. References are grouped in Section~\ref{sec:discussion}.

\subsection{Words}
Let \( \A \)  be a finite set, which we shall call  the {\em alphabet}.  The elements \( a\in\A \) are called  {\em letters}. A {\em word} \( w\) is an arbitrary finite sequence of letters \( a_1a_2\dots a_n\), \(a_i\in\A\). We  denote by $\mathcal{W}$ the set of all words, including the empty word $\emptyset$, i.e.\ the word with zero letters. No distinction is made between the letter \( a \) and the word having \( a \) as its only letter, so that \( \A \) is seen a subset of \( \W \).

We work with mappings  \(\delta:\W\to\R\) and use the notation \(\delta_w\) to refer to the real value that \( \delta \) takes at \( w\in\W \). The set  \( \RW \) consists of all such mappings.
Given \( \delta, \delta^\prime \in  \RW\), we associate with them  their {\em convolution product}  \(\delta \star \delta^\prime \in \RW\), defined by \((\delta\star\delta^\prime )_{\emptyset}=\delta_\emptyset \delta^\prime_\emptyset\) and, for nonempty words,
\[
 (\delta \star \delta^\prime)_{a_1a_2\dots a_n}
 =\delta_{\emptyset}\delta^\prime_{a_1a_2\dots a_n}+\sum_{j=1}^{n-1}\delta_{a_1a_2\dots a_j}\delta^\prime_{a_{j+1}\dots a_n}+\delta_{a_1a_2\dots a_n}\delta^\prime_{\emptyset}.
\]
Note that in the right-hand side there is a term for each of the ways in which \( {a_1a_2\dots a_n}\) may be split into two subwords (in more technical language {\em deconcatenated} into two subwords). The operation \( \star \)  is not commutative, but it is
is associative; to find the value of \( \delta \star \delta^\prime\star  \delta^{\prime\prime} = (\delta\star\delta ^\prime) \star\delta^{\prime\prime} =\delta\star  (\delta^\prime \star\delta^{\prime\prime}) \) at a word \( w\) we sum all the values \(\delta_v\delta_{v^\prime}^\prime\delta_{v^{\prime\prime}}^{\prime\prime}\) corresponding to triples \( v, v^\prime,v^{\prime\prime}\) that concatenated yield \( w \).
The element \( \one\in\RW\) specified by   \(\one_\emptyset = 1\) and \(\one_w = 0\) for each nonempty word $w$ is the unit of the operation \(\star\).

Given two words \( w \) and \( w^\prime \) with \( m \) and \( n \) letters respectively, their {\em shuffle  product} \(w\sh w^\prime\)  is the formal sum  of the \((m+n)!/(m!n!)\) words with \(m+n\) letters that may be obtained by interleaving the letters of \(w\) and \( w^\prime \) while preserving the order in which  the letters appear in \( w \) and \( w^\prime \). For instance,  \(a\sh b=ab+ba\), \( a\sh a = aa+ aa = 2aa\), \(ab\sh c = abc+acb+cab \), \( ab\sh cd=abcd+acbd+cabd+acdb+cadb+cdab\).


We shall denote by \( \G \) ({\em sh} for shuffle) the subset of \( \RW \) that comprises all the elements \( \gamma\in\RW \) satisfiying the so-called {\em shuffle relations:} \( \gamma_\emptyset=1 \) and, for each pair of words \( w,w^\prime \), if
\begin{equation}\label{eq:sh}
w \sh\ w^\prime=\sum_{j}w_j
\end{equation}
then
\begin{equation}\label{eq:sh2}
\gamma_{w}\gamma_{w^\prime}=\sum_j \gamma_{w_j}.
\end{equation}
For instance, \( \gamma_a\gamma_b = \gamma_{ab}+\gamma_{ba} \), \( \gamma^2_a = 2 \gamma_{aa} \),
\( \gamma_{ab}\gamma_ c = \gamma_{cab}+\gamma_{acb}+\gamma_{abc} \), etc.
For the convolution product, $\G$  is a {\em noncommutative   group} with unit \( \one \).

\subsection{Word series}\label{sec:words}
Assume now that for each letter \( a\in\A \), \( f_a:\R^d\to\R^d \) is a map. With every word \( w\in\W \), we  associate  a   {\em word basis function} \( f_w:\R^d\to\R^d\).  If \(w=a_1a_2\dots a_n\), \(n>1\), then \(f_w\) is defined recursively by
\begin{equation}\label{eq:wbs}
f_{a_1a_2\dots a_n}(x)=\big(\partial_x f_{a_2\dots a_n}(x)\big)f_{a_1}(x),
\end{equation}
where \( \partial_x f_{a_2\dots a_n}(x)\) denotes  the value at the point $x$ of the Jacobian matrix of $f_{a_2\dots a_n}$. For the empty word,   $f_\emptyset$ is simply the identity map \(x\mapsto x\).
With every $\delta\in\RW$ we associate a  {\em word series}. This is the formal series
\[
W_{\delta}(x)=\sum_{w\in\mathcal{W}}\delta_{w}f_{w}(x).
\]
The $\delta_w$ are the {\em coefficients} of the series. The notation \(W_{\delta}(x)\) does not incorporate the dependence on the \(f_a\), which are given once and for all.

As a very important example, consider the $d$-dimensional initial value problem
\begin{equation}\label{pvideter}
\frac{d}{dt}x=\sum_{a\in\A}{\lambda_a(t)f_a(x)}, \qquad x(t_0)=x_0
\end{equation}
where, for each $a\in\A$, $\lambda_a$ is a real-valued function of \(t\).
For each   \(t\), the solution value \(x(t)\in\R^d\) has a word series expansion
\begin{equation}\label{eq:solution}
x(t)=W_{\alpha(t;t_0)}(x_0)= \sum_{w\in\W}{\alpha_w(t;t_0)f_w\big(x_0\big)},
\end{equation}
with  coefficients  given by
\begin{equation}\label{eq:alpha1}
\alpha_\emptyset(t;t_0) = 1,\qquad \alpha_{a}(t;t_0)=\int_{t_0}^t\lambda_{a}(s)\,ds,\quad a \in \A,
\end{equation}
and, for words with \( n>1 \) letters, recursively,
\begin{equation}\label{eq:alpha2}
  \alpha_{a_1a_2\cdots a_n}(t;t_0)=\int_{t_0}^t\alpha_{a_1a_2\cdots a_{n-1}}(s;t_0)\lambda_{a_n}(s)\,ds.
\end{equation}
Thus, for a word with \(n>0\) letters, \(\alpha_w(t;t_0)\)  is an \(n\)-fold iterated integral or, equivalently, an integral over a simplex in \(\R^n\).

As we shall see later, for splitting numerical integrators, the numerical solution after a single step also possesses a word series expansion.

For future reference we point out that, as \( t \rightarrow t_0\), for each word of \( n\) letters,
\begin{equation}\label{eq:alphabound}
\alpha_{a_1a_2\dots a_n}(t;t_0) = \mathcal{O}\big((t-t_0)^n)
\end{equation}

In the simplest case where the alphabet consists of a single letter \( \A = \{a\} \) and
\( \lambda_a(t) = 1 \) for each \(t\),  there is one word \( a^n=a\dots a\) with \( n \) letters, \( n = 0,1, \dots\), and the corresponding coefficient is
\begin{equation}\label{eq:taylor}
\alpha_{a^n} = \frac{(t-t_0)^n}{n!};
 \end{equation}
 the word series representation \eqref{eq:solution} just coincides with the standard Taylor expansion of \( x(t) \) around \( t_0\)with the derivatives of \( x\)  expressed by means of the mapping \(f_a\), e.g.
\begin{align*}
   \frac{d}{dt} x &=  f_a(x), \\
   \frac{d^2}{dt^2} x & = \big( \partial_x f_a(x)\big) \frac{d}{dt} x =
  \big( \partial_x f_a(x)\big)f_a(x)=f_{aa}(x), \\
  \frac{d^3}{dt^3} x & = \big( \partial_x f_{aa}(x)\big) \frac{d}{dt} x =
  \big( \partial_x f_{aa}(x)\big)f_a(x)=f_{aaa}(x), \\
   \cdots &=\cdots
\end{align*}
For an alphabet with \( N\) letters, if \( \lambda_{a}(t) = 1\) for each letter and each \(t\), then \( \alpha_w = (t-t_0)^n/n! \) for any of the   \( N^n\) words \( w \)  with \( n \) letters. In this  case, \eqref{eq:solution}  is the Taylor series for \( x(t) \) with the derivatives of \( x(t) \) written in terms of the \( f_{a}\), \(a\in \A\).

It is also important to note that, in \eqref{eq:solution}, the coefficients \( \alpha_w \) depend on the functions \( \lambda_a \), \( a\in\A\), and are independent of the \( f_a \) in \eqref{pvideter}; on the contrary, the word basis functions \( f_w \) are independent of
the \( \lambda_a \) and change with \( f_a \). This will make it possible to compare later different splitting integrators by expressing them in terms of one common set of word basis functions.

The following two results will be required in the next section.

\begin{prop}\label{prop:alphagroup}
For  any choice of the functions \( \lambda_a\), \(a\in\A\), and any  \( t  \), \( t_0 \), the coefficients \( \alpha_w(t;t_0) \) computed in \eqref{eq:alpha1}--\eqref{eq:alpha2} satisfy the shuffle relations, i.e.\ the element
\( \alpha(t;t_0)\in\RW\) lies in the group \( \G \subset \RW\).
\end{prop}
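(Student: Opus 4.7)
The plan is to prove the shuffle relations by induction on $m+n$, where $m$ and $n$ are the lengths of the two words being shuffled. The base case $m=0$ (or $n=0$) is immediate since $\alpha_\emptyset = 1$ and $\emptyset \sh w = w$, so both sides of \eqref{eq:sh2} reduce to $\alpha_w(t;t_0)$.

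For the inductive step, fix words $w = a_1\cdots a_m$ and $w' = b_1\cdots b_n$ with $m,n \ge 1$, and consider the two sides of \eqref{eq:sh2} as functions of $t$. Both sides vanish at $t=t_0$ by \eqref{eq:alphabound} (each factor is $\mathcal{O}(t-t_0)$), so it suffices to show their derivatives with respect to $t$ agree. For the left-hand side, the Leibniz rule together with the recursion \eqref{eq:alpha2} gives
\[
\frac{d}{dt}\big(\alpha_w(t;t_0)\,\alpha_{w'}(t;t_0)\big)
= \alpha_{a_1\cdots a_{m-1}}(t;t_0)\,\lambda_{a_m}(t)\,\alpha_{w'}(t;t_0)
+ \alpha_{w}(t;t_0)\,\alpha_{b_1\cdots b_{n-1}}(t;t_0)\,\lambda_{b_n}(t).
\]

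The key combinatorial fact is the standard recursive decomposition of the shuffle product: every term in $w \sh w'$ ends either in the letter $a_m$ (in which case the preceding letters form an arbitrary shuffle of $a_1\cdots a_{m-1}$ with $b_1\cdots b_n$) or in the letter $b_n$ (with the preceding letters an arbitrary shuffle of $a_1\cdots a_m$ with $b_1\cdots b_{n-1}$). Therefore if $w\sh w' = \sum_j w_j$, applying \eqref{eq:alpha2} to each $w_j$ and grouping according to the last letter yields
\[
\frac{d}{dt}\sum_j \alpha_{w_j}(t;t_0)
= \Big(\!\!\sum_{v \in (a_1\cdots a_{m-1})\sh w'}\!\!\alpha_v(t;t_0)\Big)\lambda_{a_m}(t)
+\Big(\!\!\sum_{v \in w\sh (b_1\cdots b_{n-1})}\!\!\alpha_v(t;t_0)\Big)\lambda_{b_n}(t).
\]
By the induction hypothesis applied to the pairs $(a_1\cdots a_{m-1},w')$ and $(w,b_1\cdots b_{n-1})$, the two parenthesized sums equal $\alpha_{a_1\cdots a_{m-1}}(t;t_0)\,\alpha_{w'}(t;t_0)$ and $\alpha_w(t;t_0)\,\alpha_{b_1\cdots b_{n-1}}(t;t_0)$ respectively. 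The derivative expressions therefore coincide, and integrating from $t_0$ to $t$ finishes the induction.

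The main obstacle is purely bookkeeping: one needs to ensure that the recursive splitting of the shuffle product by the last letter meshes correctly with the recursion \eqref{eq:alpha2}, which also peels off the last letter. Once that alignment is spelled out, the rest of the argument is just the product rule plus the induction hypothesis. The condition $\alpha_\emptyset = 1$ in the definition of $\G$ is satisfied by \eqref{eq:alpha1}, completing the verification that $\alpha(t;t_0) \in \G$.
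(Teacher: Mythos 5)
Your proof is correct. Note that the paper itself gives no proof of Proposition~\ref{prop:alphagroup}: in Section~\ref{sec:discussion} it simply attributes the result to Ree \cite{ree}. Your argument --- induction on $m+n$, differentiating both sides of \eqref{eq:sh2} with respect to $t$, matching the product rule against the last-letter recursion \( (ua)\sh (vb) = (u\sh (vb))a + ((ua)\sh v)b \), invoking the induction hypothesis on the two shorter pairs, and integrating from \(t_0\) --- is the classical proof of the shuffle identity for iterated integrals, and you correctly isolate the one point that needs care, namely that the recursion \eqref{eq:alpha2} and the recursive decomposition of the shuffle product both peel off the \emph{last} letter, so the two bookkeepings mesh (multiplicities included). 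Two cosmetic remarks only: the vanishing of both sides at \(t=t_0\) follows directly from the integrals being taken over a degenerate interval, so citing \eqref{eq:alphabound} is unnecessary; and the differentiation step tacitly assumes the \(\lambda_a\) are regular enough (continuity suffices) for the fundamental theorem of calculus, which is covered by the paper's blanket smoothness assumptions.
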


\begin{prop}\label{prop:chen}Assume that \(t_0<t_1<t_2\), then, for  any choice of the functions \( \lambda_a\), \(a\in\A\), with the notation as above,
\[
\alpha(t_2;t_0) = \alpha(t_1;t_0) \star \alpha(t_2;t_1).
\]
\end{prop}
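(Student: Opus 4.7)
The plan is to argue by induction on the number of letters $n$ in the word at which both sides of the identity are evaluated. For the empty word, the defining conventions give $\alpha_\emptyset(t_2;t_0)=1$ and, by the definition of $\star$, $(\alpha(t_1;t_0)\star\alpha(t_2;t_1))_\emptyset=\alpha_\emptyset(t_1;t_0)\,\alpha_\emptyset(t_2;t_1)=1$, so the base case is immediate.

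For the inductive step, fix $w=a_1a_2\cdots a_n$ with $n\geq 1$ and assume the identity has been established for every word of length strictly less than $n$. Using the recursion \eqref{eq:alpha2}, I would write
\[
\alpha_{a_1\cdots a_n}(t_2;t_0)=\int_{t_0}^{t_1}\alpha_{a_1\cdots a_{n-1}}(s;t_0)\,\lambda_{a_n}(s)\,ds+\int_{t_1}^{t_2}\alpha_{a_1\cdots a_{n-1}}(s;t_0)\,\lambda_{a_n}(s)\,ds.
\]
The first integral is, again by \eqref{eq:alpha2}, precisely $\alpha_{a_1\cdots a_n}(t_1;t_0)$, which accounts for the $j=n$ (full deconcatenation on the left) contribution to the convolution $\alpha(t_1;t_0)\star\alpha(t_2;t_1)$.

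For the second integral, I would apply the inductive hypothesis at the intermediate time $s\in[t_1,t_2]$ to expand
\[
\alpha_{a_1\cdots a_{n-1}}(s;t_0)=\sum_{j=0}^{n-1}\alpha_{a_1\cdots a_j}(t_1;t_0)\,\alpha_{a_{j+1}\cdots a_{n-1}}(s;t_1),
\]
where the $j=0$ and $j=n-1$ terms involve the empty subword with value $1$. Interchanging sum and integral, each inner integral $\int_{t_1}^{t_2}\alpha_{a_{j+1}\cdots a_{n-1}}(s;t_1)\,\lambda_{a_n}(s)\,ds$ is, by \eqref{eq:alpha2} applied with initial time $t_1$, equal to $\alpha_{a_{j+1}\cdots a_n}(t_2;t_1)$. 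Combining this with the boundary term $\alpha_{a_1\cdots a_n}(t_1;t_0)\,\alpha_\emptyset(t_2;t_1)$ reconstructs exactly the full deconcatenation sum $(\alpha(t_1;t_0)\star\alpha(t_2;t_1))_{a_1\cdots a_n}$ defined at the beginning of Section 2.

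No step here is truly difficult; the entire argument is Chen's classical lemma on iterated integrals, repackaged in the notation $\star$. The only mild obstacle is bookkeeping: one has to keep track of the boundary cases $j=0$ and $j=n$ in the convolution so that the term obtained from the $[t_0,t_1]$ piece and the term obtained from the $j=0$ summand of the inductive expansion are not miscounted. Fubini's theorem (which is implicit in the interchange of sum and integral above) applies without issue because the sum is finite.
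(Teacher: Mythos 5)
Your proof is correct. Note that the paper itself does not actually prove Proposition~\ref{prop:chen}: in Section~\ref{sec:discussion} it attributes the identity to Chen \cite{chen}, and immediately after the statement it only illustrates the two-letter case, remarking that the equality for \(ab\) can be obtained by writing \(\alpha_{ab}(t_2;t_0)\) as a double integral over a triangle and decomposing that triangle into two smaller triangles and a rectangle. Your induction is the general analytic form of exactly that geometric decomposition: splitting the outermost integral in \eqref{eq:alpha2} at \(t_1\) produces the \lq lower triangle\rq\ contribution \(\alpha_{a_1\cdots a_n}(t_1;t_0)\,\alpha_\emptyset(t_2;t_1)\) (the \(j=n\) deconcatenation term), while the inductive expansion of \(\alpha_{a_1\cdots a_{n-1}}(s;t_0)\) for \(s\in[t_1,t_2]\) supplies the rectangle and upper-triangle pieces, i.e.\ the remaining terms \(j=0,\dots,n-1\). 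Your bookkeeping of the boundary cases is right --- the \(j=n\) term comes only from the \([t_0,t_1]\) piece and the \(j=0\) term only from the empty left factor in the inductive expansion, so nothing is double-counted --- and the interchange of the finite sum with the integral needs no justification beyond linearity. The single cosmetic point worth making explicit is that the inductive hypothesis is applied to the triple \(t_0<t_1<s\) with \(s\) ranging over \((t_1,t_2]\); at \(s=t_1\) the expansion degenerates to a tautology (all \(\alpha_w(t_1;t_1)\) vanish except for \(w=\emptyset\)), so the integrand identity holds on all of \([t_1,t_2]\) and the substitution under the integral sign is legitimate. In short: a complete, self-contained elementary proof of a statement the paper delegates to the literature, and one that matches the sketch the paper does give for \(n=2\).
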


As an example, for the two-letter word \(ab\), the  proposition yields, since \( \alpha_{\emptyset}(t_1;t_0) = \alpha_{\emptyset}(t_2;t_1) = 1\),
\[
\alpha_{ab}(t_2;t_0)= \alpha_{ab}(t_2;t_1)+\alpha_a(t_1;t_0) \alpha_b(t_2;t_1)+\alpha_{ab}(t_1;t_0),
\]
an equality that may be obtained elementary by writing the left-hand side as a double integral over a triangle and then decomposing the triangle into two smaller triangles and a rectangle.

\subsection{Word series operators}
Real-valued functions \( \chi \)  defined in \(\R^d\) shall be called {\em observables}.
For every letter \( a \), \( D_a \) is the linear  differential operator  that maps the observable \( \chi \)
into the new observable \( D_a \chi \) defined by
\[
D_a \chi (x)=\sum_{i=1}^d f_a^i(x)\frac{\partial}{\partial x_i}\chi(x),\qquad x\in\R^d.
\]
For each word \( w=a_1a_2\cdots a_n \) with more than one letter, we define
the operator \( D_w \) by composing the operators associated with the letters of \(w\):
\[
D_{a_1a_2\dots a_n}=D_{a_1}\circ D_{a_2}\circ\cdots\circ D_{a_n}.
\]
For the empty word the  corresponding operator is the identity: \( D_\emptyset \chi(x) = \chi(x)\). Note that the dependence of the \(D_w\) on the functions \(f_a\) is not incorporated into the notation.
Given \( \delta\in\RW \), we define its {\em word series operator} as the  formal linear differential operator:
\[
D_{\delta}=\sum_{w\in\W}{\delta_w D_w}.
\]
It is trivial to check that convolution product \( \star \) is defined in such a way that it  corresponds to the  composition
of the associated word series operators:
\[
D_\delta \circ D_{\delta^\prime} = D_{\delta \star \delta^\prime},\qquad \delta,\delta^\prime\in\RW.
\]

The differential operators \( D_w\), \( w\in\W \), may also be applied in a componentwise way to vector-valued observables  defined in \( \R^d \).  By considering the application of \(D_w\) to the identity map \( id: x \mapsto x \), \(x\in\R^d\), we find that the word basis function \( f_w \) and the operator \( D_w \) are related through the formula
\(
f_{w}=D_{w} id
\).
By implication, \( W_\delta(x) = D_\delta id(x)\) for \( \delta \in \RW\) and \(x \in \R^d \).

\subsection{Handling word series and word series operators}

The following theorem provides   rules for handling word series  and word series operators. Note the order in which \(\gamma\) and \(\delta\) appear in \eqref{eq:Gcomp}.

\begin{theo}\label{th:rules}
 Let \( \gamma \) be an element of the group \( \G \). Then:
\begin{itemize}
\item  (Composition of a word series and an observable.) For any   (real or vector valued) observable
 \( \chi \),
\begin{equation}\label{eq:Gobs}
\chi\big( W_{\gamma}(x) \big)=D_{\gamma}\chi(x),
\end{equation}
\item (Composition of word series.) For every \( \delta\in \RW\), we have
\begin{equation}\label{eq:Gcomp}
W_{\delta}\big(W_{\gamma}(x)\big)=W_{\gamma\star\delta}(x).
\end{equation}
\end{itemize}
\end{theo}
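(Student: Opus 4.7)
The plan is to prove part (1) first and then bootstrap it to obtain part (2). The conceptual engine behind part (1) is the multiplicativity of the operator \( D_\gamma \) on observables: \( D_\gamma(\chi\psi)=(D_\gamma\chi)(D_\gamma\psi) \). To establish this I would first derive, by a short induction on the length of \( w \) based on the ordinary Leibniz rule applied at each letter, the expansion
\[
D_w(\chi\psi)=\sum_{(u,v)}c(w;u,v)\,(D_u\chi)(D_v\psi),
\]
where the sum runs over ordered pairs of (possibly empty) words and \( c(w;u,v) \) is the multiplicity of \( w \) inside the shuffle \( u\sh v \). Multiplying by \( \gamma_w \), summing over \( w \), and recognising the inner sum \( \sum_w c(w;u,v)\gamma_w=\gamma_u\gamma_v \) as precisely the defining shuffle relation \eqref{eq:sh}--\eqref{eq:sh2}, one obtains the desired multiplicativity.

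With multiplicativity established, part (1) follows because both sides of \( \chi(W_\gamma(x))=D_\gamma\chi(x) \) are multiplicative in \( \chi \) (the left side tautologically, the right side by what has just been proved), they share the constant term (guaranteed by \( \gamma_\emptyset=1 \)), and they agree on the coordinate observables: applying \( D_\gamma \) to the \( i \)-th coordinate function and using the identity \( f_w=D_w\,id \) recalled at the end of Section~\ref{sec:words} yields precisely the \( i \)-th entry of \( W_\gamma(x) \). Extension to an arbitrary smooth \( \chi \) is then formal: one Taylor-expands \( \chi \) around \( x \) and compares the two sides monomial by monomial in \( y-x \).

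For part (2) the work is essentially done. Applying part (1) componentwise to the vector-valued observable \( f_w \) gives \( f_w(W_\gamma(x))=D_\gamma f_w(x) \). The recursive definition \eqref{eq:wbs} of the word basis functions, together with the fact that \( D_v\circ D_w=D_{vw} \) for concatenated words (immediate from the definition \( D_{a_1\cdots a_n}=D_{a_1}\circ\cdots\circ D_{a_n} \)), implies \( D_v f_w=f_{vw} \). Hence
\[
W_\delta(W_\gamma(x))=\sum_w\delta_w\, D_\gamma f_w(x)=\sum_{v,w}\gamma_v\delta_w\, f_{vw}(x),
\]
and regrouping the double sum according to the single word \( u=vw \), while invoking the definition of the convolution product, yields \( \sum_u(\gamma\star\delta)_u f_u(x)=W_{\gamma\star\delta}(x) \), as claimed.

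The main obstacle is the combinatorial core of part (1): the Leibniz-type expansion of \( D_w(\chi\psi) \) and its precise meshing with the shuffle relations. Once that link is secured, the rest reduces to formal bookkeeping with the convolution product, and the passage from polynomial to arbitrary smooth observables is routine in the formal-power-series framework adopted throughout the paper.
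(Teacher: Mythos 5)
The paper itself offers no proof of Theorem~\ref{th:rules}; it defers to \cite{words}, and your argument is correct and follows essentially the same standard route used there: the shuffle relations make \( D_\gamma \) multiplicative on observables via the iterated Leibniz expansion \( D_w(\chi\psi)=\sum c(w;u,v)(D_u\chi)(D_v\psi) \) dual to the shuffle product, which yields \eqref{eq:Gobs} after reduction to coordinate observables through \( f_w=D_w\,id \). Your derivation of \eqref{eq:Gcomp} from \( D_vf_w=f_{vw} \) and the regrouping \( u=vw \) is also sound and correctly produces the order \( \gamma\star\delta \) emphasized in the statement.
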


It is  important to emphasize that  the hypothesis \( \gamma\in\G \) is essential for the result to hold; the conclusions are not true if \( \gamma\in\RW \) does not belong to the group.
According to Proposition~\ref{prop:alphagroup}, the coefficients \( \alpha_w(t;t_0)\) may play the role of
\(\gamma \) in the theorem. This is the key to the analysis of splitting integrators, as we show in Section 3.

\subsection{References and discussion}
\label{sec:discussion}

The material in Section~\ref{sec:words} is connected to several algebraic theories, even though, for the benefit of more applied readers, those connections have been downplayed  in our exposition. The vector space \(\RW\) is the dual of the shuffle Hopf algebra and the group \(\G\) is the group of characters of such algebra, see \cite{words} and its references. The monograph \cite{reut} contains many relevant results on the combinatorics of words.

Series indexed by the words of an alphabet were introduced and studied extensively by Chen, see e.g. \cite{chen}. Sometimes the series are presented as combining words themselves, i.e. they are of the form \(\sum_w \delta_w w\) with \( \delta\in\RW\) (Chen series). In other applications, notably in control theory \cite{f}, the series combine differential operators as in
our \(D_\delta= \sum_w \delta_w D_w\) above. Word series \cite{anderfocm}, \cite{part2}, \cite{part3}, \cite{words} while essentially equivalent to Chen series are series of mappings and therefore, in numerical analysis, they may be used in the same way as B-series. Word series may also be used to study analytically dynamical systems: \cite{words}, \cite{juanluis}, \cite{k}. Chen series also play an important role in Lyons rough  path theory, see e.g.\ \cite{b}.

Each series basis function \(f_w\) may be decomposed as a sum of elementary differentials \cite{words}. After such a decomposition each word series becomes a B-series; the B-series has a term for each coloured rooted tree. Since there are far more coloured rooted trees with \(n\) vertices than words with \(n\) letters, the B-series format is less compact.
An additional advantage of word series over B-series is the simplicity of the operation \(\star\); the rule for composing B-series is substantially more complicated. On the other hand word series have a more limited scope than B-series:
not all B-series may be rewritten as word series. Splitting integrators may be described by word series, but that is not the case for Runge-Kutta algorithms
or additive Runge-Kutta algorithms \cite{aderito}.

A proof Theorem~\ref{th:rules} may be seen in \cite{words}.
The fact that iterated integrals satisfy the shuffle relations (Proposition~\ref{prop:alphagroup}) was first noted by Ree \cite{ree}. Proposition~\ref{prop:chen} is due to Chen \cite{chen}; in view of \eqref{eq:solution} and \eqref{eq:Gcomp}, the result expresses in terms of words the composition rule for solution operators \(\phi_{t_2;t_0}= \phi_{t_2;t_1}\circ\phi_{t_1;t_0} \).

\section{Composing exact solutions with the help of word series}

Theorem \ref{th:rules} leads  to a technique to represent the local error of splitting integrators both for deterministic and Stratonovich equations. Even though the idea is completely general, for notational convenience we shall present it by means of a very simple (deterministic) example. Consider the particular case where in the system
\eqref{pvideter}  the alphabet consists of three letters \(\A=\{a,b,c\}\), i.e.
\[
\frac{d}{dt}x = \lambda_a(t)f_a(x)+\lambda_b(t)f_b(x)+\lambda_c(t)f_c(x),
\]
and denote by \( \phi_{t,t_0}: \R^d\to\R^d\) the solution mapping, i.e.\ the mapping
such that, for each \( x_0\), \( \phi_{t,t_0}(x_0)\) is the value at \( t \) of the solution with initial condition \( x(t_0)=x_0\).
Assume that the split systems
\[
\frac{d}{dt}x = \lambda_a(t)f_a(x)+\lambda_b(t)f_b(x),\quad \frac{d}{dt}x = \lambda_c(t)f_c(x),
\]
may be integrated analytically and denote by \( \phi^{(1)}_{t,t_0}: \R^d\to\R^d\) and  \( \phi^{(2)}_{t,t_0}: \R^d\to\R^d\) their solution mappings.
The simplest splitting integrator  advances the numerical solution from \( t_0 \) to \( t_0+h\), \(h>0\), by means of the mapping
\[
\widetilde\phi_{t_0+h,t_0} = \phi^{(2)}_{t_0+h,t_0}\circ  \phi^{(1)}_{t_0+h,t_0}.
\]

From \eqref{eq:solution} we have the word series representation (we write \( \alpha_w \) instead of \( \alpha_w(t_0+h;t_0) \))
\begin{eqnarray*}
\phi_{t_0+h,t_0}(x_0) &=& W_{\alpha(t_0+h;t_0)}(x_0)\\
 &= &x_0 + \alpha_a f_a(x_0)+\alpha_b f_b(x_0)+\alpha_cf_c(x_0)\\&&\qquad\qquad+\alpha_{aa}f_{aa}(x_0)+\alpha_{ab}f_{ab}(x_0)+\alpha_{ac}f_{ac}(x_0)+\cdots
\end{eqnarray*}
(note that for simplicity only three of the nine terms with two letters have been displayed).
For the first split system, using still the alphabet \(\{a,b,c\}\) and including all words with two letters,
\begin{eqnarray*}
\phi^{(1)}_{t_0+h,t_0}(x_0) &=& W_{\alpha^{(1)}(t_0+h;t_0)}(x_0)\\
 &= &x_0 + \alpha_a f_a(x_0)+\alpha_b f_b(x_0)\\&&\qquad\qquad+\alpha_{aa}f_{aa}(x_0)+\alpha_{ab}f_{ab}(x_0)\\
 &&\qquad\qquad+\alpha_{ba}f_{ba}(x_0)+\alpha_{bb}f_{bb}(x_0)+\dots;
\end{eqnarray*}
when computing the coefficients \( \alpha^{(1)}_w \) by means of \eqref{eq:alpha1}--\eqref{eq:alpha2}
we have to take
\( \lambda_c(t) =0 \), so that \( \alpha^{(1)}_w= \alpha_w\) if \( w\) does not contain the letter \( c\) and
\( \alpha^{(1)}_w=0\) otherwise.
Similarly,
\begin{eqnarray*}
\phi^{(2)}_{t_0+h,t_0}(x_0) &=& W_{\alpha^{(2)}(t_0+h;t_0)}(x_0)\\
 &= &x_0 + \alpha_c f_c(x_0)+\alpha_{cc}f_{cc}(x_0)+\cdots,
\end{eqnarray*}
where the dots stand for words with three or more letters.
Now, after invoking Proposition~\ref{prop:alphagroup} and \eqref{eq:Gcomp}, we find
\begin{equation}\label{eq:seriesalphatilde}
  \widetilde\phi_{t_0+h,t_0}(x_0) = W_{\widetilde\alpha(t_0+h;t_0)}(x_0),
\end{equation}
with
\[
\widetilde\alpha(t_0+h;t_0) = \alpha^{(1)}(t_0+h;t_0)\star \alpha^{(2)}(t_0+h;t_0)\in\G.
\]
By using the definition of the convolution product \(\star\), we compute
\begin{eqnarray*}
\widetilde\phi_{t_0+h,t_0}(x_0) &=& x_0 + \alpha_a f_a(x_0)+\alpha_b f_b(x_0)+\alpha_cf_c(x_0)\\
&&   +\alpha_{aa}f_{aa}(x_0)+\alpha_{ab}f_{ab}(x_0)+\alpha_{a}\alpha_{c}f_{ac}(x_0)\\
&&   +\alpha_{ba}f_{ba}(x_0)+\alpha_{bb}f_{bb}(x_0)+\alpha_{b}\alpha_{c}f_{bc}(x_0)+\alpha_{cc}f_{cc}(x_0)+\cdots
\end{eqnarray*}
It is extremely easy to find the coefficients in the last expansion. If \( w\) is  a concatenation \(w^\prime w^{\prime\prime}\), where the (possibly empty) word \(w^\prime\) does not include the letter \(c\) and the (possibly empty) word \(w^{\prime\prime}\) does not include the letters \(a\) or \(b\), then \(\widetilde\alpha_w= \alpha_{w^\prime}\alpha_{w^{\prime\prime}}\); if \( w\) is not a concatenation of that form, then the coefficient is \(0\).

The fact that the expansion of the integrator mapping \( \widetilde \phi\) and the solution mapping
\(\phi\) agree for words with \( < 2\) letters implies, via \eqref{eq:alphabound}, that both differ by
\(\mathcal{O}(h^2)\), i.e.\ that the integrator is consistent.
The local error  may be expanded as a word series
\[
\widetilde\phi_{t_0+h,t_0}(x_0) -\phi_{t_0+h,t_0}(x_0)=
W_{\delta(t_0,h)}(x_0)\] with
\[
\delta(t_0,h) =\alpha^{(1)}(t_0+h;t_0)\star \alpha^{(2)}(t_0+h;t_0)-\alpha(t_0+h,t_0).
\]
In particular, the leading \(\mathcal{O}(h^2)\) term, corresponding to two-letter words, is given by:
\[
    (\alpha_a\alpha_c-\alpha_{ac}) f_{ac}(x_0)
     + (\alpha_b\alpha_c-\alpha_{bc}) f_{bc}(x_0)
    - \alpha_{ca}f_{ca}(x_0) - \alpha_{cb}f_{cb}(x_0).
\]

In some circumstances (for instance when studying conservation of energy or other invariants of motion) it is of interest to look at the error in an observable \(\chi\) after a single step:
\[
 \chi\Big(\widetilde\phi_{t_0+h,t_0}(x_0)
 \Big)-\chi\Big(\phi_{t_0+h,t_0}(x_0)\Big),
\]
Expansions of errors of this kind are easily derived with the help of \eqref{eq:Gobs}. In our example, we  may write, {\em without any additional computation},
\begin{eqnarray*}
  \chi\Big(\widetilde\phi_{t_0+h,t_0}(x_0)
 \Big)-\chi\Big(\phi_{t_0+h,t_0}(x_0)\Big)&=&   (\alpha_a\alpha_c-\alpha_{ac}) D_{ac}\chi(x_0)\\
     && + (\alpha_b\alpha_c-\alpha_{bc}) D_{bc}\chi(x_0)\\
    &&- \alpha_{ca}D_{ca}\chi(x_0)- \alpha_{cb}D_{cb}\chi(x_0)+\cdots,
\end{eqnarray*}

For this simple example the results presented here could have been found easily by elementary means. However,
as pointed out above, the word series technique works for arbitrary  splitting coefficients leading to high-order algorithms and arbitrary ways of splitting the right-hand side of \eqref{pvideter} into two or more parts.

\section{Splitting methods for Stochastic Differential Equations}

In this section we show how word series may be used to analyze local errors of splitting integrators for SDE.

\subsection{Expanding the true solution}
Consider the  \(d\)-dimensional   Stratonovich SDE,
\begin{equation}\label{eq:SDE}
dx=\sum_{a\in\A_{det}}f_{a}(x)\,dt+\sum_{A\in\A_{stoch}}f_{A}(x)
\circ dB_A(t)
\end{equation}
where \( \A_{det}\) and \(\A_{stoch}\) are finite sets without common elements and the  \(B_{A}(t)\), \( A\in \A_{stoch}\), are  independent scalar Wiener processes defined on the same filtered probability  space.  We shall use the material above with the alphabet \(\A = \A_{det} \cup  \A_{stoch}\).
The letters in \( \A_{det}\)  (respectively in \(\A_{stoch}\))   are called  {\em deterministic} (respectively  {\em stochastic}).  The {\em weight}     $\| w \|$ of the letter \( w \) is defined as the number of deterministic letters of   \( w\) plus a half of the number of stochastic letters. The weight thus takes values in the set \( \Nh =\{ 0, 1/2, 1, 3/2,\dots\}\). Note that, if the \( w_j\) are the words resulting from shuffling \( w\) and \( w^\prime\) as in \eqref{eq:sh}, then, for each \( j\), \( \|w_j\| = \|w\|+\| w^\prime\|\). Also when two words are concatenated the weight of the result is the sum of the weights of the factors.

Since  Stratonovich integrals follow the rules of ordinary calculus, from \eqref{eq:solution} we conclude that the solution of \eqref{eq:SDE} with initial condition \( x(t_0) = x_0\) has the expansion, \( t > t_0 \geq 0\),
\begin{equation}\label{eq:seriesJ}
  x(t) = W_{J(t;t_0)}(x_0),
\end{equation}
where the
\( J_{w}(t;t_0)\) are the well-known Stratonovich iterated integrals (\( wa\) and \(wA\) are  the words obtained by appending the letter \(a \) or \( A\) at the end of \(w\)):
\begin{align*}
    J_{\emptyset}(t;t_0)&=1,\\
    J_{a}(t;t_0)&=\int_{t_0}^{t}\,{ds}=t-t_0,\qquad a\in\A_{det},\\
    J_{A}(t;t_0)&=\int_{t_0}^{t}\circ dB_{A}(s)=B_A(t_1)-B_A(t_0),\qquad A\in\A_{stoch},\\
    J_{wa}(t,t_0)&=\int_{t_0}^{t}{J_{w}(s;t_0)\, ds},\qquad a\in\A_{det},\\
    J_{wA}(t;t_0)&=\int_{t_0}^{t}J_w(s;t_0)\,{\circ dB_A(s)},\qquad A\in\A_{stoch}.
\end{align*}

The expansion \eqref{eq:seriesJ} of course coincides with the familiar Stratonovich-Taylor expansion (see e.g.\ \cite[Chapter 5]{kloeden}).

The following result summarizes some  properties of the \( J_w(t;t_0)\) which will be required later. The first item expresses the shuffle relations of iterated integrals, see also Proposition \ref{prop:alphagroup}. The second, third and fifth item are well known. The fourth is a trivial consequence of the second and third.

\begin{prop}\label{prop:J}The iterated srochastic Stratonovich integrals \( J_w(t;t_0)\) possess the following properties.

\begin{itemize}
\item \( J(t;t_0)\in\G\).
\item The joint distribution of any finite subfamily of the family of random variables \( \{ h^{-\|w\|} J_w(t_0+h;t_0)\}_{w\in\W}\) is independent
of \(t_0\geq 0\) and \( h>0\).
\item \( \E\mid J_w(t_0+h;t_0)\mid^p<\infty\), for each \( w\in\W\), \(t_0\geq 0\), \( h>0 \) and \(p\in [0,\infty)\).
\item For each \( w\in\W\) and any finite \( p \geq 1\), the  (\(t_0\)-independent) \(L^p\) norm of the random variable \( J_w(t_0+h;t_0)\) is \(\mathcal{O}(h^{\|w\|})\), as \( h\downarrow 0\).
\item
\( \E\ J_w(t_0+h;t_0) = 0\) whenever \( \|w\|\) is not an integer.
\end{itemize}
\end{prop}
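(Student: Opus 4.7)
The plan is to prove the five items in the order stated, with items 1 and 2 carrying the real content and items 3--5 following quickly from them together with standard facts about Brownian iterated integrals. For item 1 the proof of Proposition \ref{prop:alphagroup} will carry over essentially verbatim: the shuffle identity is obtained by induction on the sum of the lengths of the two words, the inductive step being a single integration by parts, and because Stratonovich calculus obeys the ordinary product rule $d(XY) = X\circ dY + Y\circ dX$ pathwise, the argument goes through when deterministic differentials $ds$ and Stratonovich differentials $\circ dB_A(s)$ are treated on equal footing.

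For item 2 I would exploit the combined stationarity and scaling of Brownian motion. Setting $\hat B_A(r) := h^{-1/2}\big(B_A(t_0+hr)-B_A(t_0)\big)$ produces a family of independent standard Brownian motions whose joint law does not depend on $t_0$ or $h$. After the change of variable $s = t_0+hr$ in each nested integral, every deterministic integration contributes a factor $h$ (from $ds = h\,dr$) and every Stratonovich integration contributes a factor $h^{1/2}$ (from $\circ dB_A(s) = h^{1/2}\circ d\hat B_A(r)$); collecting these yields exactly the prefactor $h^{\|w\|}$, so $h^{-\|w\|}J_w(t_0+h;t_0)$ coincides in distribution with an iterated integral over $[0,1]$ driven by a canonical family of Brownian motions, whose joint distribution is manifestly independent of $t_0$ and $h$.

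Items 3--5 then fall into place. For item 3 I would induct on the length of $w$, converting each outer Stratonovich differential to an It\^o differential plus the usual correction, and applying the Burkholder--Davis--Gundy inequality to the martingale part together with H\"older to the bounded-variation part; this transports $L^p$-bounds one letter at a time since the inner integrands are iterated integrals of shorter words. Item 4 is then immediate from items 2 and 3, because the $L^p$-norm of $J_w(t_0+h;t_0)$ equals $h^{\|w\|}$ times the finite, $(t_0,h)$-independent $L^p$-norm of the rescaled variable. For item 5 I would use the symmetry $B_A\mapsto -B_A$ applied simultaneously to every $A\in\A_{stoch}$: the joint law of the driving processes is preserved while each $J_w$ picks up the sign $(-1)^k$, where $k$ counts the stochastic letters of $w$; when $\|w\|$ is not an integer $k$ is odd, forcing $\E J_w = -\E J_w$ and hence $\E J_w = 0$. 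The only mildly delicate point is item 3 --- one must check that the Stratonovich-to-It\^o conversion does not spoil the moment induction --- but since the correction terms are themselves iterated integrals of strictly lower weight the induction closes cleanly.
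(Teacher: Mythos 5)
Your proposal is correct, and it actually supplies more than the paper does: the paper gives no proof of this proposition, remarking only that the first item expresses the shuffle relations of iterated integrals (attributed elsewhere to Ree and extended to the Stratonovich setting because Stratonovich calculus obeys the rules of ordinary calculus), that the second, third and fifth items are ``well known,'' and that the fourth is a trivial consequence of the second and third. Your arguments are the standard ones behind those citations and they all close: the inductive integration-by-parts proof of the shuffle identity does carry over verbatim since $d(XY)=X\circ dY+Y\circ dX$ holds for continuous semimartingales; the Brownian scaling $\hat B_A(r)=h^{-1/2}\bigl(B_A(t_0+hr)-B_A(t_0)\bigr)$ together with the fact that Stratonovich integrals are stable under deterministic monotone reparametrization gives the simultaneous representation of all rescaled $J_w$ in terms of one canonical family, hence the joint-law statement; the moment induction via Stratonovich-to-It\^o conversion plus Burkholder--Davis--Gundy is sound (an alternative shortcut is to note that each $J_w$ lies in a finite Wiener chaos after conversion, so all moments are automatically finite); item 4 is derived exactly as the paper indicates; and the reflection argument for item 5 is clean, since $\|w\|\notin\N$ forces an odd number of stochastic letters and hence $\E J_w=(-1)^{\mathrm{odd}}\E J_w=0$, the finiteness of $\E|J_w|$ from item 3 justifying the cancellation. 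The only point worth stating explicitly in item 3 is the one you already flag, namely that the It\^o correction terms are themselves iterated integrals of strictly shorter words, so the induction on word length is well founded.
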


In view of   the Proposition, when the word series in \eqref{eq:seriesJ} is rewritten as
\[
x(t) =\sum_{n\in \Nh}\:\: \sum_{\| w\| = n} J_w(t;t_0)f_w(x_0),
\]
for each \( n \in \Nh\), the term in the inner sum  is \( \mathcal{O}((t-t_0)^{n})\) in any \( L^p\) norm, \(p<\infty\). This should be compared with the deterministic case, where, as we saw above,  the bound \eqref{eq:alphabound} leads to grading  the expansion
\eqref{eq:solution}  by the number of letters of the words.

We shall need below the following auxiliary result (\( \Pi\) denotes of course a product):
\begin{lemma}\label{lemma:vanishing}
Assume that \( w_1, \dots, w_\ell,\) are words with \( \sum_j \|w_j\|\notin \N\). Then, for each \(t_0\geq 0\) and \(h>0\),
\[
\E \big(\Pi_j J_{w_j}(t_0+h;t_0)\big) = 0.
\]
\end{lemma}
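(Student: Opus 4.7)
The plan is to reduce the product $J_{w_1}\cdots J_{w_\ell}$ to a linear combination of single iterated integrals $J_{v_k}$ and then apply the fifth item of Proposition~\ref{prop:J} to each term separately.

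First I would use the shuffle relations. By the first item of Proposition~\ref{prop:J}, $J(t_0+h;t_0)\in\G$, so for any two words $w,w'$ with shuffle expansion $w\sh w'=\sum_j v_j$, we have
\[
J_w(t_0+h;t_0)\, J_{w'}(t_0+h;t_0)=\sum_j J_{v_j}(t_0+h;t_0).
\]
Iterating this identity $\ell-1$ times yields
\[
\prod_{i=1}^\ell J_{w_i}(t_0+h;t_0)=\sum_k J_{v_k}(t_0+h;t_0),
\]
where the $v_k$ are the words that appear in the iterated shuffle $w_1\sh w_2\sh\cdots\sh w_\ell$.

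Second I would observe that weight is additive under shuffling: each word obtained by interleaving the letters of $w$ and $w'$ contains exactly the same multiset of letters as the concatenation $ww'$, so its weight equals $\|w\|+\|w'\|$ (this was already noted in the paragraph introducing $\|\cdot\|$). By induction, every word $v_k$ in the iterated shuffle satisfies
\[
\|v_k\|=\sum_{i=1}^\ell \|w_i\|,
\]
which by hypothesis is not an integer.

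Finally I would take expectations. The third item of Proposition~\ref{prop:J} guarantees that every $J_{v_k}$ is integrable (and in fact $L^p$ for all finite $p$, so moments and the finite sum may be freely interchanged with $\E$). Since $\|v_k\|\notin\N$ for each $k$, the fifth item of Proposition~\ref{prop:J} gives $\E J_{v_k}(t_0+h;t_0)=0$, and hence
\[
\E\bigl(\Pi_i J_{w_i}(t_0+h;t_0)\bigr)=\sum_k \E J_{v_k}(t_0+h;t_0)=0.
\]
There is no real obstacle here; the only thing to check carefully is that shuffling preserves the total weight, which is immediate from the definition of $\|\cdot\|$ since shuffling only rearranges letters without changing how many deterministic and stochastic letters are present.
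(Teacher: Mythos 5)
Your proof is correct and follows essentially the same route as the paper's: use the shuffle relations to turn the product into a sum of iterated integrals whose words all inherit the non-integer weight $\sum_j\|w_j\|$, then apply the last item of Proposition~\ref{prop:J}. The extra remarks on integrability and weight additivity are fine but were already noted in the surrounding text.
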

\begin{proof} By using repeatedly the shuffle relations \eqref{eq:sh}--\eqref{eq:sh2}, the product of iterated integrals  may be rewritten as a sum of iterated integrals corresponding to the words \( w^\prime_i\) resulting from shuffling the \( w_j\), \( j = 1,\dots,\ell\). As noted above each \( w^\prime_i\) has the non-integer weight \( \sum_j \|w_j\|\)and we may use the last item of Proposition~\ref{prop:J}.
\end{proof}

The idea of the proof (i.e\ the use of the shuffle relations to rewrite products of iterated integrals as as sums) has been used in \cite{gaines} as a means to evaluate the moments of iterated stochastics integrals. An instance of the shuffle relations for iterated stochastic integrals is presented in Proposition 5.2.10 of  \cite{kloeden}; this well-known monograph does not relate the formula presented there to the algebra of word shuffles. A number of recent papers have also exploited the connection between the Stratonovich calculus and the shuffle Hopf algebra, see e.g.\ \cite{kur} and its references.

\subsection{Expanding the numerical solution}

In a splitting integrator, a time-step \( t_0\rightarrow t_0+h\), \( h>0\), is performed by applying a mapping
\(
\widetilde\phi_{t_0+h,t_0}
\)
defined as a composition of several solution mappings \[ \phi^{(i)}_{t_0+c_ih,t_0+d_ih},\qquad i=1,\dots,I,\] corresponding to SDEs resulting from splitting the right-hand side of
\eqref{eq:SDE}. The \( c_i\) and \(d_i\) are real constants associated with the particular integrator. By proceeding as in the deterministic case, the use of the operation \(\star\) leads to a word-series representation (cf.\ \eqref{eq:seriesalphatilde}),
\[
\widetilde\phi_{t_0+h,t_0}(x_0) = W_{\widetilde J(t_0+h;t_0)}(x_0),\qquad i = 1,\dots,I,
\]
where, for each nonempty \( w\in\W\), \( \widetilde J_w(t_0+h;t_0)\) is either zero or a sum of products of iterated Stratonovich integrals corresponding to words whose concatenation is \( w\). Therefore, in each product, the iterated integrals being multiplied  correspond to words  whose weights add up to  \( \|w\| \).

\begin{prop}\label{prop:Jtilde} The coefficients \( \widetilde J_w(t_0+h;t_0)\), \(w\in\W\), associated with a splitting integrator possess the properties  of the exact values \( \widetilde J_w(t_0+h;t_0)\) listed in Proposition~\ref{prop:J}.
\end{prop}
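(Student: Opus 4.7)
The plan is to extract the five items of Proposition~\ref{prop:J} for $\widetilde J$ from the representation
\[
\widetilde J(t_0+h;t_0) \;=\; J^{(1)}(t_0+c_1 h;t_0+d_1 h)\star\cdots\star J^{(I)}(t_0+c_I h;t_0+d_I h),
\]
where $J^{(i)}$ denotes the word-series coefficient of the exact flow of the $i$-th split SDE. The first item of Proposition~\ref{prop:J} applied to each sub-SDE places every factor in $\G$; since $\G$ is a group under $\star$, the first item for $\widetilde J$ is immediate.

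For items two through four, introduce the rescaled Brownian motions $\tilde B_A(s):=h^{-1/2}(B_A(t_0+sh)-B_A(t_0))$, $s\in[0,1]$, $A\in\A_{stoch}$. By stationarity of increments and Brownian self-similarity, $(\tilde B_A)_{A\in\A_{stoch}}$ is a family of independent standard Brownian motions whose joint law is independent of $(t_0,h)$. A direct change of variables in every iterated Stratonovich integral yields $J^{(i)}_v(t_0+c_i h;t_0+d_i h)=h^{\|v\|}\widehat J^{(i)}_v$, where $\widehat J^{(i)}_v$ is the analogous integral in $\tilde B$ over $[d_i,c_i]$. Since $\widetilde J_w$ is, by construction of $\star$, a finite sum of products $\prod_j J^{(i_j)}_{v_j}$ whose constituent words concatenate to $w$ (so that $\sum_j\|v_j\|=\|w\|$ by additivity of the weight under concatenation), one obtains
\[
\widetilde J_w(t_0+h;t_0) \;=\; h^{\|w\|}\,\Phi_w(\tilde B),
\]
for a fixed polynomial functional $\Phi_w$ of the rescaled path. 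Item two follows at once. For item three, $\Phi_w(\tilde B)$ is a polynomial in iterated Brownian integrals, hence lies in a finite Wiener chaos and has finite moments of every order; products are controlled by H\"older's inequality. Item four is the quantitative version of the same identity.

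For the fifth item, assume $\|w\|\notin\N$, so that the number of stochastic letters in $w$ is odd. Every product contributing to $\widetilde J_w$ consists of iterated Stratonovich integrals whose constituent words concatenate to $w$, hence the total stochastic letter count across the factors is odd in each summand. Under the Brownian sign flip $\{B_A\}\mapsto\{-B_A\}$, which preserves the joint law of the driving motions, each Stratonovich iterated integral with $k$ stochastic letters picks up the factor $(-1)^k$, so every summand of $\widetilde J_w$ changes sign. Consequently $\widetilde J_w\stackrel{d}{=}-\widetilde J_w$, and the (already finite) expectation must vanish.

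The main obstacle is the lack of independence between the factors $J^{(i)}$: they are driven by the same underlying Brownian motions over generally overlapping subintervals, which rules out any naive factorisation of laws or of expectations. Folding all randomness into the single rescaled path $\tilde B$ on $[0,1]$, combined with the sign-flip symmetry used for item five, circumvents this difficulty and avoids the need to extend Lemma~\ref{lemma:vanishing} (stated only for integrals over a common interval) to the subinterval setting.
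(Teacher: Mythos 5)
Your proof is correct, and for the fifth item it takes a genuinely different route from the paper. For items one through four you are essentially in line with the paper, which disposes of them in one sentence as consequences of Proposition~\ref{prop:J} and of the representation of \( \widetilde J_w \) as a sum of products of iterated integrals over subintervals; your explicit rescaling \( \widetilde J_w(t_0+h;t_0)=h^{\|w\|}\Phi_w(\tilde B) \) is a clean way of making that sentence precise and yields items two and four simultaneously. The real divergence is in the last item. The paper reduces, via Chen's relation (Proposition~\ref{prop:chen}), to the case where the integration intervals are pairwise disjoint or identical, factorises the expectation using independence of Brownian increments over disjoint intervals, and then applies Lemma~\ref{lemma:vanishing} (i.e.\ the shuffle relations) to at least one group of factors whose weights sum to a non-integer. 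You instead note that \( \|w\|\notin\N \) forces an odd number of stochastic letters in \(w\), hence in every deconcatenation contributing to \( \widetilde J_w \), and that the law-preserving sign flip \( B_A\mapsto -B_A \) therefore multiplies every summand, and so \( \widetilde J_w \) itself, by \(-1\); combined with the integrability already established, \( \E\,\widetilde J_w=0 \). This parity argument is more elementary and more robust: it sidesteps both the interval decomposition and the shuffle algebra, and it handles the overlapping-interval issue (which you correctly identify as the obstruction to any naive independence argument) in one stroke. What the paper's shuffle-based route buys in exchange is quantitative information --- rewriting products of iterated integrals as sums of iterated integrals is exactly the device used to \emph{compute} moments (as in the reference to Gaines), not merely to show that odd ones vanish --- but for the statement at hand your argument is complete.
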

\begin{proof} The first four items of this proposition are   consequences of Proposition~\ref{prop:J} and the representation of each \( \widetilde J_w(t_0+h;t_0)\), \( w\neq \emptyset\), as a sum of products of
iterated  integrals. For the last item, in view of the linearity of the expectation, it is enough to prove that, for any \(t_j < t_j^*\),
\[
\sum_j \| w_j\|\notin\N \quad\Rightarrow\quad
\E\big(\prod_j J_{w_j}(t_j^*;t_j)\big)= 0.\]
Furthermore, we may assume that we are in the particular case where any two intervals \( (t_j,t_j^*)\subset \R\) are either disjoint or equal to each other; the general situation  may be reduced to the particular case by  decomposing with the help of Proposition~\ref{prop:chen}.
  Under this assumption, let us group together the iterated integrals sharing the same \(  (t_j,t_j^*) \) and write
\[
\prod_j J_{w_j}(t_j^*;t_j) =\prod_k \prod_{j\in I_k} J_{w_j}(t_k^*;t_k);
\]
here, as \( k\) varies, any two intervals \((t_k,t_k^*)\subset\R\) are disjoint, and, for each value of \( k\), the set
\(I_k\) comprises the indices \( j\) for which  \((t_j^*;t_j)\) coincides with \((t_k^*;t_k)\). Now, by  independence,
\[
\E \big(\prod_j J_{w_j}(t_j^*;t_j)\big)  = \prod_k \E\big(\prod_{j\in I_k} J_{w_j}(t_k^*;t_k)\big),
\]
and the proof will be completed if we show that there is at least a value of \(k\) for which
\[
\E\big(\prod_{j\in I_k} J_{w_j}(t_k^*;t_k)\big) = 0.
\]
Since
\[
 \sum_{k} \sum_{j\in I_k} \| w_j\| = \sum_j \|w_j\| \notin \N,
\]
at least one of the inner sums  is not an integer and we may apply Lemma~\ref{lemma:vanishing}.
\end{proof}

\subsection{The local error}

The preparations above have proved the main result of this article:

\begin{theo}\label{th:main} For a splitting integrator as above, the local error possesses a word series expansion
\begin{equation}\label{eq:main1}
\widetilde\phi_{t_0+h,t_0}(x_0)-\phi_{t_0+h,t_0}(x_0) = W_{\delta(t_0,h)}(x_0) = \sum_{n\in \Nh}\:\: \sum_{\| w\| = n} \delta_w(t_0,h) f_w(x_0),
\end{equation}
with coefficients
\[
\delta_w(t_0,h) = \widetilde J_w(t_0+h;t_0)-J_w(t_0+h;t_0),\qquad w\in \W,
\]
that, in any \( L^p\) norm, \(1\leq p< \infty\), satisfy, uniformly in \(t_0\geq 0\),
\[
\| \delta_w(t_0,h)\|_p =\mathcal{O}(h^{\|w\|}), \qquad h\downarrow 0.
\]

In addition, for each observable \( \chi\), conditional on \( x_0\),
\begin{equation}\label{eq:main2}
\E\chi\big( \widetilde\phi_{t_0+h,t_0}(x_0)\big)-\E\chi\big(\phi_{t_0+h,t_0}(x_0)\big) = \sum_{n\in \N}\:\: \sum_{\| w\| = n} \big(\E\delta_w(t_0,h)\big) D_w \chi (x_0).
\end{equation}
\end{theo}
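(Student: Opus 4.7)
The plan is to assemble the theorem directly from the representations already established, together with Propositions \ref{prop:J} and \ref{prop:Jtilde}.

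First I would invoke the representations $\phi_{t_0+h,t_0}(x_0) = W_{J(t_0+h;t_0)}(x_0)$ from \eqref{eq:seriesJ} and $\widetilde\phi_{t_0+h,t_0}(x_0) = W_{\widetilde J(t_0+h;t_0)}(x_0)$ obtained in Section 4.2. Because a word series depends linearly on its coefficients, subtracting termwise immediately produces \eqref{eq:main1} with $\delta_w(t_0,h) = \widetilde J_w(t_0+h;t_0) - J_w(t_0+h;t_0)$. The regrouping by $n\in\Nh$ in the outer sum is just a bookkeeping device that organises words by weight; it requires no new identity.

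For the strong bound I would apply the triangle inequality in $L^p$: the fourth item of Proposition~\ref{prop:J} gives $\|J_w(t_0+h;t_0)\|_p = \mathcal{O}(h^{\|w\|})$ uniformly in $t_0\ge 0$, and the fourth item of Proposition~\ref{prop:Jtilde} gives the same bound for $\widetilde J_w(t_0+h;t_0)$; adding them yields the claimed estimate for $\delta_w$.

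For the weak expansion \eqref{eq:main2} the key step is to pass from word series to word series \emph{operators} via \eqref{eq:Gobs}. The first item of Proposition~\ref{prop:J} places $J(t_0+h;t_0)$ in the shuffle group \G{}, and the first item of Proposition~\ref{prop:Jtilde} does the same for $\widetilde J(t_0+h;t_0)$, so that Theorem~\ref{th:rules} can be applied to each flow separately. This yields $\chi\bigl(\widetilde\phi_{t_0+h,t_0}(x_0)\bigr) = \sum_w \widetilde J_w\, D_w\chi(x_0)$ and $\chi\bigl(\phi_{t_0+h,t_0}(x_0)\bigr) = \sum_w J_w\, D_w\chi(x_0)$. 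Subtracting, taking conditional expectation given $x_0$, and interchanging expectation with the (formal) sum leaves $\sum_w \E\delta_w(t_0,h)\, D_w\chi(x_0)$. The fifth item of each of the two propositions then gives $\E J_w = \E\widetilde J_w = 0$ whenever $\|w\|\notin\N$, so $\E\delta_w = 0$ on these words and the outer grading in \eqref{eq:main2} collapses to integer weights $n\in\N$.

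The principal delicacy is conceptual rather than computational: one must apply \eqref{eq:Gobs} \emph{before} subtracting, because $\delta(t_0,h) = \widetilde J(t_0+h;t_0) - J(t_0+h;t_0)$ is not an element of the shuffle group \G{} (for instance $\delta_\emptyset = 0$), so Theorem~\ref{th:rules} cannot be invoked for $\delta$ directly. Once this order of operations is respected, the argument reduces to the two propositions and the linearity of $W_{(\cdot)}$ and $D_{(\cdot)}$.
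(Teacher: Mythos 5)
Your argument is correct and is essentially the paper's own proof: the paper simply states that ``the preparations above'' (the word series representations of $\phi$ and $\widetilde\phi$ together with Propositions~\ref{prop:J} and~\ref{prop:Jtilde}) establish the theorem, and your write-up spells out exactly that assembly, including the correct use of the fourth and fifth items for the $L^p$ bounds and the integer-weight grading in \eqref{eq:main2}. Your closing observation that \eqref{eq:Gobs} must be applied to $J$ and $\widetilde J$ separately, since $\delta\notin\G$, is a worthwhile clarification that the paper leaves implicit.
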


The theorem implies that the {\em  strong order conditions}
\begin{equation}\label{eq:oc1}
\widetilde J_w(t_0+h;t_0)=J_w(t_0+h;t_0),\qquad \|w\| = 0,1/2,1,\dots, \mu,\quad \mu\in\Nh,
\end{equation}
ensure that the series in \eqref{eq:main1} only comprises terms of size \( \mathcal{O}(h^{\mu+1/2})\). In fact, under suitable assumptions on \eqref{eq:SDE}, the fulfillment of the order conditions ensures that the local error possesses an \( \mathcal{O}(h^{\mu+1/2})\)  bound (see the Appendix).

It should be pointed out that, since both \( J(t_0+h;t_0) \) and \( \widetilde J(t_0+h;t_0) \) satisfy the shuffle relations, the conditions in \eqref{eq:oc1} corresponding to different words are not independent from one another. For instance, from the shuffle
\( a\sh a = 2 aa\), \( a\in\A\), we may write
\[
\big(J_a(t_0+h;t_0)\big)^2 = 2 J_{aa}(t_0+h;t_0),\quad  \big( \widetilde J_a(t_0+h;t_0) \big)^2 = 2 \widetilde J_{aa}(t_0+h;t_0),
\]
and therefore the order condition for the word \( aa\) is fulfilled if and only if the same happens for \( a\). Lyndon words \cite{reut} may be used to identify subsets of independent order conditions (cf. \cite{phil}) but we shall not concern ourselves with such an investigation.

If, for a given alphabet \(\A\) and given coefficients  \( \widetilde J_w(t_0+h;t_0) \), one demands
that the series in \eqref{eq:main1} only comprises terms of size \( \mathcal{O}(h^{\mu+1/2})\)
for {\em all possible choices} of the vector fields \( f_a\), \(f_A\), then the conditions \eqref{eq:oc1}  are not only sufficient but also {\em necessary}. This happens because, as it is easy to show, in such a scenario, the word basis functions are mutually independent. However this consideration is not of much practical value;  splitting integrators are useful because they are adapted to the specific structure of the problem being solved and therefore one is interested in the behavior for individual problems not in catering for all possible choices of \( f_a\), \(f_A\). The best way to deal with specific problems is to write down, up to the desired order, the word series expansions of
the true and numerical solutions and compare them after taking into account the shuffle relations and the specific expressions of the word basis functions; this will be illustrated in the next section.
For instance, if,
for the problem at hand, a word basis function \(f_w\) vanishes identically, then it is clearly not necessary to impose the associated order condition in \eqref{eq:oc1}.

Similar considerations apply to the {\em weak order conditions}
\begin{equation}\label{eq:oc2}
\E\widetilde J_w(t_0+h;t_0)=\E J_w(t_0+h;t_0),\qquad \|w\| = 0,1,2,\dots, \nu,\quad \nu\in\N,
\end{equation}
which ensure that the series in \eqref{eq:main2} only comprises terms of size \( \mathcal{O}(h^{\nu+1})\).

The conditions \eqref{eq:oc1}--\eqref{eq:oc2} are similar to those found in \cite{burrage} for stochastic Runge-Kutta integrators (however \cite{burrage} only shows that a condition corresponding to \eqref{eq:oc2} implies
that the expectation of the local error is \( \mathcal{O}(h^{\nu+1})\); arbitrary observables \(\chi\) are not considered there).

\section{Application to Langevin dynamics}
We shall illustrate the application of the foregoing material by considering the Langevin  equations
\begin{eqnarray*}
 dq &  = & M^{-1}p\,dt \\
 dp &  = & F(q)\,dt-\gamma p\,dt+\sigma M^{1/2} dB(t),
\end{eqnarray*}
where \(M\)  is the  \(d\times d\) diagonal mass matrix with diagonal entries \(m_i>0\),  \(\gamma >0\) is the friction coefficient, \(\sigma\) governs the fluctuation due to noise, \(B\) is a  \(d\)-dimensional Wiener process, and  the force \(F\) originates from a potential, i.e. \(F = -\nabla V\) for a suitable scalar-valued function \(V\). Since the noise is additive there is no distinction between the Stratonovich and Ito interpretations.

\subsection{Splitting the Langevin dynamics}

After setting \(x=(q,p)\in\R^d\times\R^d\), the equations are the particular instance of \eqref{eq:SDE} given by
\begin{equation}
dx(t)=f_{a}(x)dt+f_{b}(x)dt+f_{c}(x)dt+\sum_{i=1}^d f_{A_i}(x)\circ dB_i(t)
\label{eq:SDElangevin}
\end{equation}
with
\[
f_{a}(q,p)=(M^{-1}p,0),\quad f_{b}(q,p)=(0,F(q)), \quad f_{c}(q,p)=(0,-\gamma p),
\]
and, for \(i=1,\dots,d\),
\[
f_{A_i}(q,p)=(0,\sigma\sqrt{m_i}e_i),
\]
where \( e_i\) is the \(i\)-th unit vector in \( \R^d\). The deterministic letters \(a\), \(b\) and \(c\) are respectively  associated with inertia, potential forces and friction; as it will become apparent below the word basis functions \( f_w\), \( w\in\W\) also have clear physical meaning.

The system \eqref{eq:SDElangevin} is split into three parts corresponding to  \(\{f_a\}\), \(\{f_b\}\) and \(\{f_c\), \(f_{A_1}\), \dots, \(f_{A_d}\}\).\footnote{The splitting considered here is not the only meaningful way to split the Langevin equations; a Hamiltonian/Orstein-Uhlenbeck splitting is considered  in e.g.\ \cite{nawaf}. See also \cite{Leim3}.}
Each of the three split systems may be integrated explicitly. With a terminology common in molecular dynamcics, the solution of the first is a \lq drift\rq\ in position, \( q \mapsto q+(t-t_0)M^{-1}p\) (\(p\) remains constant). The solution of the
second  is a  \lq  kick\rq\ in momentum \(p\mapsto p+(t-t_0)F(q)\) (\(q\) remains constant). The third split system defines an Ornstein-Uhlenbeck process in \(p\). Leimkuhler and Matthews \cite{Leim}, \cite{Leim2} use the letters A, B and O to refer to these split systems and the acronym ABOBA for the Strang-like algorithm
\[
\widetilde\phi^{ABOBA}=\phi^A_{t_0+h;t_0+h/2}\circ\phi^B_{t_0+h;t_0+h/2}\circ \phi^O_{t_0+h;t_0}\circ\phi^B_{t_0+h/2;t_0}\circ\phi^A_{t_0+h/2;t_0}.
\]
With the help of an analysis of the large friction limit and numerical experiments, these authors find that the very similar BAOAB algorithm
\[
\widetilde\phi^{BAOAB}=\phi^B_{t_0+h;t_0+h/2}\circ\phi^A_{t_0+h;t_0+h/2}\circ \phi^O_{t_0+h;t_0}\circ\phi^A_{t_0+h/2;t_0}\circ\phi^B_{t_0+h/2;t_0}
\]
substantially improves on ABOBA. In this section we analyze by means of word series the local error of both algorithms. Our findings complement (rather than duplicate) those in \cite{Leim},  \cite{Leim2},  \cite{Leim3}.

\begin{table}[t]
\begin{center}
\begin{tabular}{c|c|c|c|c}
\( \|w\|\) & \(w\) & \(\widetilde J_w^{ABOBA}\)  & \(\widetilde J_w^{BAOAB}\)  & Exact?\\\hline
0& \(\emptyset\)  & \(1 \)  &  \(1 \)     &      \checkmark\\ \hline
\(1/2\)&   \(A_j\)    &  \(J_{A_j}\)  & \(J_{A_j}\)  &      \checkmark\\\hline
\(1 \)  & \(a,b,c\)   &   \(h \)      &   \(h \)     &      \checkmark\\\hline
\(3/2\) & \(A_ia\)    & \(hJ_{A_i}/2\) & \(hJ_{A_i}/2\) & \\
        &\(A_ic\)     & \(J_{A_ic}\) & \(J_{A_ic}\) &  \checkmark       \\\hline
2& \(ab,ba,bc,ca,cc\) & \(h^2/2\)  & \(h^2/2\) &  \checkmark  \\\hline
5/2&\(A_iab\)   &0   &\(h^2J_{A_i}/4\)&  \\
&\(A_ica\) &  \(hJ_{A_ic}/2\)  & \(hJ_{A_ic}/2\) &  \\
&\(A_icc\) &  \(J_{A_icc}\) & \(J_{A_icc}\) & \checkmark \\
\end{tabular}
\end{center}
\caption{Coefficients of the splitting methods ABOBA and BAOBA for words \(w\) with weight \(\|w\|<3\) and nonvanishing basis function \(f_w\). A check mark signals agreement with the exact \(J_w\). All iterated stochastic integrals have domain \((t_0+h;t_0)\).}
\label{table1}
\end{table}

\subsection{The word basis functions}

The structure of the Langevin equations implies that many word basis functions are identically zero.
The vector fields \(f_a\), \( f_b\), \(f_c\), and \(f_{A_i}\)  have many null components and additional simplifications are due to  \(f_{A_i}\) being constant, \(f_a\) and \(f_c\) being linear in \( p\) and independent of \(q\), and \(f_b\) being independent of \(p\). In particular, the relation \( f_{ba}(q,p) = (M^{-1}F(q),0)\)
shows that \(f_{ba}\) is a function of \(q\) alone and, since the \(q\) components of \(f_c\)  and \(f_{A_i}\) vanish,
we have, in view of  \eqref{eq:wbs},
\begin{equation}\label{eq:basisvanish}
f_{cba}(q,p)= 0,\qquad f_{A_iba}(q,p)= 0,\: i = 1,\dots,d,
\end{equation}
for each \(q\) and \(p\).
Physically, \eqref{eq:basisvanish} means that  the value \( M^{-1}F(q) \)  of  the acceleration created by the potential forces would not be affected if noise or friction changed instantaneously the
momentum of the system.
On the other hand, in general,
\begin{equation}\label{eq:basisvanish2}
f_{cab}(q,p)\neq 0,\qquad f_{A_iab}(q,p)\neq 0,\quad i = 1,\dots,d.
\end{equation}
The second block of \( f_{ab}(q,p) = (0,\partial_q F(q)M^{-1}p)= (0,(d/dt) F(q) )\) is the contribution  to \((d^2/dt^2)p\) that arises from the potential forces. This contribution is a function of \(q\) {\em and} \(p\) and its value would be affected if friction or noise changed instantaneously the momentum.  It is also useful to note at this point that, according to \eqref{eq:wbs}, if \(f_w\) vanishes identically, then the same is true for all words of the form \(w^\prime w\), i.e.\ for all words that have \( w\) as a suffix.
Table~\ref{table1} lists the words \( w\) with weight \( < 3\) and nonvanishing basis function.

\subsection{Coefficients}
Once the relevant word basis functions have been identified, we proceed to find the coefficients. Let us begin with  ABOBA. From the definition of the operation \(\star\), it is clear that,
if \( w\)  is not of the form \( a^kb^\ell w^\prime b^ma^n\), with \(k, \ell, m, n\) nonnegative integers and \(w^\prime\) a word not including the letters \(a\) or \(b\), then
\(
\widetilde J^{ABOBA}_w = 0.
\)
For a word that may be written in that form in a unique way (e.g.\ \(abccba\)),  the value of \(\widetilde J^{ABOBA}_w\) is
\begin{align*}
    &J_{a^k}(t_0+h/2;t_0)J_{b^\ell}(t_0+h/2;t_0)J_{w^\prime}(t_0+h;t_0)  \\
    &\qquad\qquad\qquad\qquad\times J_{b^m}(t_0+h;t_0+h/2)J_{a^n}(t_0+h;t_0+h/2)
\end{align*}
or, from \eqref{eq:taylor},
\[
 = \frac{1}{k!\ell!m!n!}\left(\frac{h}{2}\right)^{k+\ell+m+n} J_{w^\prime}.
\]
For a word that may be written in  the form \( a^kb^\ell w^\prime b^ma^n\) in several ways, we sum over all possible ways (e.g.\ for \(aa\), we have  \(\ell=m = 0\), \(w^\prime = \emptyset\), and three possibilities, \( (k,n) = (2,0) \), \( (k,n) = (1,1) \), \( (k,n) = (0,2) \) leading to a coefficient \( (1/2)(h/2)^2 + (h/2)^2 + (1/2)(h/2)^2 = h^2/2\)). Similar considerations, with the roles of \(a\) and \(b\) interchanged apply to the alternative BAOAB method. It  now takes next to no time to find the coefficients in the third and fourth columns of the table.

\subsection{Comparing the algorithms}
At this point, we are  in a position to compare the algorithms.
 Since at the words \(A_ia\), \(i=1,\dots,d\), both methods  are in error, for both of them,   the local error expansion in \eqref{eq:main1} begins with \(\mathcal{O}(h^{3/2})\) terms. Furthermore ABOBA and BAOAB share the same coefficient values \( \widetilde J_w\) at the leading (i.e.\ \(\mathcal{O}(h^{3/2})\)) order and also at the next order
(corresponding to words of weight 2). In fact, for the words that feature in the table, the only difference between both integrators corresponds to the words \(A_iab\), \(i=1,\dots,d\). For these, the exact solution has coefficient
\[
J_{A_iab} \sim \mathcal{N}\Big(0,\frac{h^5}{20}\Big),
\]
BAOAB has
\[
\widetilde J^{BAOAB}_{A_iab}= \frac{h^2}{4}J_{A_i} \sim \mathcal{N}\Big(0,\frac{h^5}{16}\Big),
\]
while, as noted above,
\[
\widetilde J^{ABOBA}_{A_iab}= 0,
\]
due to the pattern \(ab\) after the stochastic letter.
The joint distribution of \(J_{A_iab}\) and  \(\widetilde J^{BAOAB}_{A_iab}\) is Gaussian with
covariance \(h^5/24\) and therefore the correlation between both variables is
\[
\frac{h^5/24}{\sqrt{h^5/20}\:\sqrt{h^5/16}} = \frac{\sqrt{5}}{3}\approx 0.74,
\]
while \(J_{A_iab}\) and  \(\widetilde J^{ABOBA}_{A_iab}\) are obviously uncorrelated.
Thus, for this word, ABOBA provides a very poor approximation to the exact coefficient. Due to the symmetric role played by the letters \(a\) and \(b\) in the algebra of words, for  \(A_iba\), it is BAOAB that has an identically zero coefficient. However this is irrelevant for the present discussion because, for that word, the basis function vanishes
as noted in \eqref{eq:basisvanish}.

Cases where \(f_w\neq 0\), \(\widetilde J_w^{ABOBA}=0 \), but \(\widetilde J_w^{BAOAB}\) provides a nontrivial approximation to
\( J_w\)  occur for higher values of the weight. For the deterministic word \(cba\), \( \widetilde J^{BAOAB}_{cab} = h^3/4\) and \(\widetilde J^{ABOBA}_{A_icab} = 0\) (the correct value is \(h^3/6\)).
For \(A_icab\) with weight \( 7/2\), the exact solution has
\[
J_{A_icab} \sim \mathcal{N}\Big(0,\frac{h^7}{252}\Big),
\]
while
\[
\widetilde J^{BAOAB}_{A_icab} \sim \mathcal{N}\Big(0,\frac{h^7}{148}\Big),
\]
and, again due to the \(ab\) pattern,
\[
\widetilde J^{ABOBA}_{A_icab} = 0.
\]
Now the correlation between the BAOAB coefficient and the true value is \(\sqrt{21}/5\approx 0.91\).

Why does ABOBA provide poor approximations for words like \(A_iba\), \(cba\), \(A_icba\)? By looking at the physical meaning of the corresponding word basis functions (see e.g.\ the discussion of \eqref{eq:basisvanish2} presented above),
we see that the {\em above shortcomings of ABOBA stem from the following algorithmic source.} In any given time step, ABOBA uses the {\em same} value of \( F\) in both kicks (\(q\) is not updated between those kicks) and, furthermore, that common value of \(F\) only depends on the values of \(q\) and \(p\) at the beginning of the step. Thus, over the whole step, the momentum increment \(hF\) due to the potential forces does not \lq see\rq\ the presence of friction or noise in the current step. On the contrary, in BAOAB the change in \(p\) at substep O (friction and noise) causes that the  kicking force varies from the first kick to the second.\footnote{Note that BAOBA reuses in the first kick of the next step the value of \(F(q)\) employed in the second kick of the present step, so that both ABOBA and BAOAB use twice each evaluation of the potential force.}

\section{Further developments}

We have presented a systematic method, based on word series, for writing down expansions of strong and weak local errors of splitting integrators for Stratonovich SDEs. The method has been illustrated with a comparison between two related algorithms for the Langevin equations. The material may be adapted to study Ito equations, where the quasishuffle algebra replaces the shuffle algebra used here.

In the deterministic case, word series may also be applied to the computation of modified equations of integrators as in \cite{words}. Similarly the word series approach may also be extended to investigate modified equations for Ito or Stratonovich SDEs. In addition word series may be helpful in finding invariant densities of numerical algorithms. These developments will be dealt with in future work.

\section*{Appendix: error bounds}

In what follows the determistic vector fields \(f_a\), \(a\in \A_{det}\), and the stochastic vector fields \(f_A\), \(A\in \A_{stoch}\), in \eqref{eq:SDE} are assumed to be globally Lipschitz, thus guaranteeing existence and uniqueness of the initial value problem for \eqref{eq:SDE} itself  and for the split systems.
The theorems below provide bounds for the weak local error and the mean square local error.

We begin with weak approximations. The third hypothesis used below is the same as inequality (2.17) in \cite{milstein} which is key in establishing Theorem 2.5 in that reference. The first and second hypotheses just make explicit the differentiability  requirements on \(f_a\), \(f_A\), and \(\chi\) that have to be imposed to guarantee that \(D_w\chi\) makes sense when \(w\) has weight \(\nu+1\).

\begin{theo}Let \( \nu\) be a positive integer. Assume that:
\begin{itemize}
\item The deterministic vector fields \(f_a\), \(a\in \A_{det}\), are  of class
\( C^{2\nu}\), while the stochastic vector fields \(f_A\), \(A\in \A_{stoch}\), are of class
\( C^{2\nu+1}\).
\item The observable \(\chi\) is of class \(C^{2\nu+2}\) in \(\R^d\).
\item There is  a constant \( C>0 \) such that for each \( x\in\R^d \) and each word \( w \) of weight \(\nu+1\):
\[
| D_w\chi(x)| \leq C (1+|x|^2)^{1/2}.
\]
\item The weak error  conditions \eqref{eq:oc2} hold.
\end{itemize}
Then there exists  a constant \( K>0\) such that for each \(x_0\), each \(t_0\geq 0\) and each \(h>0\):
\[
|\E\chi\big( \widetilde\phi_{t_0+h,t_0}(x_0)\big)-\E\chi\big(\phi_{t_0+h,t_0}(x_0)\big)|
 \leq K (1+|x_0|^2)^{1/2} h^{\nu+1}
\]
(the expectation is conditional on \(x_0\)).
\end{theo}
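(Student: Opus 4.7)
The strategy is to combine the word series expansion of the weak local error provided by Theorem~\ref{th:main} with a truncated Stratonovich--Taylor argument in the spirit of Milstein~\cite{milstein}. By the fourth hypothesis, the weak order conditions~\eqref{eq:oc2} annihilate every term of integer weight $n\le\nu$ in the formal series~\eqref{eq:main2}. Recall furthermore that, for half-integer $\|w\|$, both $\E J_w(t_0+h;t_0)$ and $\E\widetilde J_w(t_0+h;t_0)$ vanish (last items of Propositions~\ref{prop:J} and~\ref{prop:Jtilde}), so such terms never contribute to~\eqref{eq:main2} in the first place. Thus after cancellation the surviving contributions correspond to words of integer weight $n\ge\nu+1$.

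The plan is then to split the remaining sum into (i) the finitely many terms of weight exactly $\nu+1$ and (ii) a higher-order tail. For (i), the $L^p$ moment bounds on iterated integrals recorded in Propositions~\ref{prop:J} and~\ref{prop:Jtilde} yield $|\E\delta_w(t_0,h)|\le\|\delta_w(t_0,h)\|_1=\mathcal{O}(h^{\nu+1})$ uniformly in $t_0$, while by the third hypothesis $|D_w\chi(x_0)|\le C(1+|x_0|^2)^{1/2}$; since the alphabet is finite the number of words of weight $\nu+1$ is finite, so this block contributes at most $\widetilde K(1+|x_0|^2)^{1/2}h^{\nu+1}$.

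The main obstacle is (ii): the formal word series is in general divergent, so the tail must be interpreted as the remainder of a \emph{finite} expansion. The standard remedy, which matches exactly the setup of Theorem~2.5 in~\cite{milstein}, is to write $\chi(\widetilde\phi_{t_0+h,t_0}(x_0))$ and $\chi(\phi_{t_0+h,t_0}(x_0))$ as finite Stratonovich--Taylor polynomials truncated at integer weight $\nu+1$, with explicit integral remainders expressed in terms of $D_w\chi$ evaluated along intermediate solution values for words $w$ of weight $\nu+3/2$ or larger. The smoothness requirements $f_a\in C^{2\nu}$, $f_A\in C^{2\nu+1}$ and $\chi\in C^{2\nu+2}$ are calibrated so that all the derivatives $D_w\chi$ appearing in those remainders exist, and the third hypothesis supplies precisely the polynomial growth estimate (2.17) of~\cite{milstein} needed to bound them. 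The remaining moment estimates for the remainder then follow from the classical $L^p$ bounds for solutions of SDEs with globally Lipschitz coefficients together with the $\mathcal{O}(h^{\|w\|})$ iterated-integral bounds of Proposition~\ref{prop:J}, with half-integer-weight contributions again killed in expectation by independence and the last item of Proposition~\ref{prop:Jtilde}. Assembling (i) and (ii) produces the asserted uniform bound, with $K$ depending only on $\nu$, the alphabet, and the Lipschitz and growth constants attached to $f_a$, $f_A$ and $\chi$.
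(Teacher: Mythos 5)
Your overall strategy is the right one (reduce to a Milstein-style truncated stochastic Taylor expansion with integral remainder, after using the order conditions and the vanishing of expectations at half-integer weights), but two steps as written would not go through.

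First, your decomposition into (i) the weight-\(\nu+1\) block and (ii) a tail of weight \(\geq \nu+3/2\) puts the truncation level in the wrong place relative to the hypotheses. If you keep the weight-\(\nu+1\) words as polynomial terms evaluated at \(x_0\), the integral remainder of the finite expansion involves integrands \(D_w\chi\big(\phi_{s,t_0}(x_0)\big)\) for words of weight \(\geq\nu+3/2\); such a word can have up to \(2\nu+3\) stochastic letters, so bounding it requires \(\chi\in C^{2\nu+3}\) and a growth estimate on \(D_w\chi\) for \(\|w\|\geq\nu+3/2\) --- neither of which is assumed (the third hypothesis only covers \(\|w\|=\nu+1\)). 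The paper instead truncates the Stratonovich--Taylor expansion at weight \(\nu\) (including all half-integer weights \(\leq\nu\)), so that the entire residual is a sum of iterated integrals over words \(w=\ell_1\dots\ell_r\) with \(\|\ell_2\dots\ell_r\|=\nu\), whose integrands are exactly the \(D_w\chi\) controlled by the stated smoothness and growth hypotheses; your step (i) is then absorbed into the remainder and never needs to be bounded separately. After rewriting the Stratonovich iterated integrals as Ito ones, Lemma~2.2 of \cite{milstein} gives \(\E|R_{t_0,h}(x_0)|^2\leq L^2(1+|x_0|^2)h^{2\nu+2}\), hence the \(h^{\nu+1}\) bound in \(L^1\).

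Second, you dispose of the numerical residual by appealing to ``classical \(L^p\) bounds for solutions of SDEs,'' but \(\widetilde\phi_{t_0+h,t_0}\) is a composition of several flows, not the flow of a single SDE, so the Taylor-with-remainder representation does not apply to it directly. The missing idea is the device the paper borrows from the proof of Theorem~4 in \cite{words}: the one-step numerical map coincides with the exact time-\((t_0+h)\) solution of an auxiliary time-dependent SDE in which the split vector fields are switched on and off (and the Brownian motions rescaled in time) according to the composition pattern. Only after this identification can one repeat, for \(\widetilde R_{t_0,h}\), the same stochastic Taylor argument used for \(R_{t_0,h}\). Without some substitute for this step your bound on \(\E\widetilde R_{t_0,h}(x_0)\) is unsupported.
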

\begin{proof}
 Define the residuals
\[
R_{t_0,h}(x_0) = \chi\big(\phi_{t_0+h,t_0}(x_0)\big)- \sum_{n\in \N/2,\atop {n\leq\nu}}\:\: \sum_{\| w\| = n} J_w(t_0,h) D_w \chi (x_0)
\]
and
\[
\widetilde R_{t_0,h}(x_0) = \chi\big(\widetilde\phi_{t_0+h,t_0}(x_0)\big)- \sum_{n\in \N/2,\atop {n\leq\nu}}\:\: \sum_{\| w\| = n} \widetilde J_w(t_0,h) D_w \chi (x_0)
\]
associated with the true and numerical solution respectively. If the weak order conditions hold, we have, after using the fifth item in Proposition \ref{prop:J} and is counterpart in Proposition \ref{prop:Jtilde},
\[
\E\chi\big( \widetilde\phi_{t_0+h,t_0}(x_0)\big)-\E\chi\big(\phi_{t_0+h,t_0}(x_0)\big) = \E \widetilde R_{t_0,h}(x_0)-
\E  R_{t_0,h}(x_0)
\]
and our task is to successively bound the two terms the right hand-side.

For the theoretical solution, the standard stochastic Taylor expansion (see e.g.\ \cite[Section 5.6]{kloeden} or \cite[Section 1.2]{milstein}) provides the following representation as an iterated Stratonovich integral
\begin{eqnarray*}
 R_{t_0,h}(x_0) &=& \sum_{w} \int_{t_0}^{t_0+h} \circ d B_{\ell_r}(s_r)\int_{t_0}^{s_r} \circ d B_{\ell_{r-1}}(s_{r-1}) \cdots \\&&\qquad\qquad\qquad\qquad\int_{t_0}^{s_2}
\circ d B_{\ell_1}(s_1) D_w\chi\big(\phi_{s_1,t_0}(x_0)\big);
\end{eqnarray*}
here the \(\ell_i\) are deterministic or stochastic letters, the sum is extended to all words of the form  \( w= \ell_1\dots\ell_r\), where \(\|\ell_2\dots\ell_r\|= \nu\) and it is understood that, for a deterministic letter \(\ell_i\), \(dB_\ell(s_i)\) means \(ds\). We next rewrite the iterated Stratonovich integrals as combinations of iterated Ito integrals as in \cite[Remark 5.2.8]{kloeden}; in each  resulting iterated integral the sum of the weights of the letters of the Brownian motions that appear is \(\nu+1\).
An application of \cite[Lemma 2.2]{milstein} then shows that, for a suitable constant \(L\),  \( \E | R_{t_0,h}(x_0)|^2 \leq L^2 (1+|x_0|^2)
h^{2\nu+2}\),  which implies \( \E | R_{t_0,h}(x_0)| \leq L (1+|x_0|^2)^{1/2}h^{\nu+1}\).

We now turn to the residual in the numerical solution. As in the proof of Theorem 4 in \cite{words}, we observe that, given an initial condition \(x_0=x(t_0)\) and any splitting algorithm,  the numerical solution after one step  \( t_0\rightarrow t_0+h\) is the same as the value of true solution at \(t_0+h\) of  a time-dependent SDE in which the originally given vector fields  are switched on and off as time evolves. For instance, in the simplest case where the SDE is \( dx = f_a(x)dt+f_A(x)\circ dB_A(t)\) and the (Lie-Trotter) numerical scheme consists of advancing with  \( dx = f_A(x)\circ dB_A(x)\) and then with \( dx = f_a(x)dt\), the time-dependent SDE is
\[ dx = 1_{ \{ t_0+h/2< t\leq t_0+h\} } f_a(x)2dt+1_{\{t_0\leq t\leq t_0+h/2\}}f_A(x)\circ dB_A(t_0+2(t-t_0)),
\]
where \(t_0\leq t\leq t_0+h\) and \( 1_{\{\cdot\}} \) denotes an indicator function. Using this observation the numerical residual may be bounded by reproducing the steps taken above to bound the residual of the true solution.
\end{proof}

The last result refers to the mean square error. The proof is parallel to that we have just presented and will be omitted.

\begin{theo}Let \( \mu\) be a positive integer multiple of \(1/2\). Assume that:
\begin{itemize}
\item The deterministic vector fields \(f_a\), \(a\in \A_{det}\), are  of class
\( C^{2\mu}\), while the stochastic vector fields \(f_A\), \(A\in \A_{stoch}\), are  of class
\( C^{2\mu+1}\).
\item There is  a constant \( C>0 \) such that for each \( x\in\R^d \) and each word \( w \) of weight \(\mu+1\):
\[
| f_w(x)| \leq C (1+|x|^2)^{1/2}.
\]
\item The strong error  conditions \eqref{eq:oc1} hold.
\end{itemize}
Then there exists  a constant \( K>0\) such that for each \(x_0\), each  \(t_0\geq 0\) and each \(h>0\):
\[
\Big(\E|\big( \widetilde\phi_{t_0+h,t_0}(x_0)-\phi_{t_0+h,t_0}(x_0)|^2
\Big)^{1/2} \leq K (1+|x_0|^2)^{1/2} h^{\mu+1/2}
\]
(the expectation is conditional on \(x_0\)).
\end{theo}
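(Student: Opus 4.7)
The plan is to mirror the structure of the preceding weak-error proof, with two adaptations: (i) replace the scalar observable $\chi$ by the identity map $x\mapsto x$ so that residuals take values in $\R^d$; and (ii) work with $L^2$ norms throughout instead of $L^1$. I would first introduce the truncated residuals
\[
R_{t_0,h}(x_0)=\phi_{t_0+h,t_0}(x_0)-\sum_{\substack{n\in\Nh\\ n\leq\mu}}\ \sum_{\|w\|=n} J_w(t_0+h;t_0)\,f_w(x_0),
\]
and its numerical counterpart $\widetilde R_{t_0,h}(x_0)$, obtained by substituting $\widetilde\phi$ and $\widetilde J_w$ for $\phi$ and $J_w$. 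The strong order conditions \eqref{eq:oc1} enforce $\widetilde J_w=J_w$ for every word $w$ with $\|w\|\leq\mu$, so the truncated sums cancel and $\widetilde\phi_{t_0+h,t_0}(x_0)-\phi_{t_0+h,t_0}(x_0)=\widetilde R_{t_0,h}(x_0)-R_{t_0,h}(x_0)$. By the $L^2$ triangle inequality the task reduces to bounding each residual by $K(1+|x_0|^2)^{1/2}h^{\mu+1/2}$ in $L^2$.

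For the true-solution residual, I would apply the Stratonovich-Taylor expansion with remainder \cite[Section~5.6]{kloeden} componentwise, representing $R_{t_0,h}(x_0)$ as a finite sum of iterated Stratonovich integrals indexed by the remainder words just outside the hierarchical set $\{w:\|w\|\leq\mu\}$, each with innermost integrand $f_w\bigl(\phi_{s_1,t_0}(x_0)\bigr)$ evaluated along the true solution. Converting these Stratonovich integrals to combinations of Ito integrals as in \cite[Remark~5.2.8]{kloeden} and invoking Lemma~2.2 of \cite{milstein}, together with the linear-growth hypothesis on $|f_w|$ and the classical $L^2$ moment bound for the exact flow under global Lipschitz coefficients, delivers $\bigl(\E|R_{t_0,h}(x_0)|^2\bigr)^{1/2}\leq L(1+|x_0|^2)^{1/2}h^{\mu+1/2}$; the exponent is set by the lowest-weight remainder words, which have weight $\mu+1/2$.

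For the numerical residual, I would recycle the switched-SDE trick from the weak-error proof: the one-step output of any splitting algorithm coincides with the true value at $t_0+h$ of a single time-dependent SDE whose drift and diffusion coefficients are the original vector fields turned on and off by indicator functions over appropriate subintervals, with Brownian times rescaled within each subinterval. Applying the same Stratonovich-Taylor expansion with remainder to this switched SDE produces an identical $h^{\mu+1/2}$ estimate for $\widetilde R_{t_0,h}(x_0)$, since the switched coefficients remain globally Lipschitz and the rescaling preserves the weight-based $L^2$ bounds for the iterated integrals.

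The main obstacle is the bookkeeping of the remainder boundary: stochastic letters raise the weight by $1/2$ while deterministic letters raise it by $1$, so the set of remainder words is non-uniform and contains words of several weights, ranging from $\mu+1/2$ up to $\mu+1$. One must verify that the differentiability hypotheses ($C^{2\mu}$ on the deterministic fields and $C^{2\mu+1}$ on the stochastic ones) are exactly tight enough to make every $f_w$ in the remainder well-defined, and that the linear-growth bound combined with Milstein's lemma controls the $L^2$ norm of each iterated integral, with the leading $h^{\mu+1/2}$ contribution coming from the minimal-weight boundary words.
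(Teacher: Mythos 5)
Your proposal is correct and matches the paper's intent exactly: the paper omits this proof, stating only that it is parallel to the weak-error argument, and your adaptation (identity observable in place of \(\chi\), \(L^2\) norms, cancellation of the truncated sums via the strong order conditions \eqref{eq:oc1}, Stratonovich-to-Ito conversion plus Lemma~2.2 of \cite{milstein} for the true residual, and the switched-SDE representation for the numerical residual) is precisely that parallel. Your closing observation that the remainder words have weights ranging from \(\mu+1/2\) to \(\mu+1\) is a fair caveat, but it concerns the paper's stated hypotheses as much as your argument and does not constitute a gap in your approach.
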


These local error bounds, in tandem with standard results (see e.g.\ \cite{milstein}), lead to bounds for the {\em global error}. For instance for the Langevin equations considered in Section 5,  the order conditions are fulfilled with \(\mu = 1\) and \(\nu= 2\). It then follows that both integrators are convergent with mean square global errors \(\mathcal{O}(h)\) and weak global errors \(\mathcal{O}(h^2)\) if the force \( F(q)\) satisfies the corresponding smoothness and growth hypotheses.

\bigskip

{\bf Acknowledgement.} We are thankful to
Chuchu Chen and Xu Wang for some useful discussions.
J.M. Sanz-Serna has been supported by project MTM2013-46553-C3-1-P  from Ministerio de Eco\-nom\'{\i}a y Comercio, Spain. A. Alamo has been supported by Universidad de Valladolid and IMUVA

\end{document}